\documentclass[12pt,a4paper]{amsart}
\usepackage{amssymb}
\usepackage{mathabx}
\usepackage{bbm}
\usepackage{amsthm}
\usepackage{dsfont}
\usepackage{amsmath}
\usepackage{color,graphics,srcltx}
\usepackage{bm}
\usepackage{easybmat}
\usepackage{multirow,bigdelim}
\usepackage{enumerate}
\usepackage{graphicx}
\usepackage{amsthm}
\usepackage{url}
\usepackage{tikz-cd}
\usepackage{hyperref}
\hypersetup{colorlinks=true,allcolors=blue}
\usepackage{orcidlink}
\usepackage{mathrsfs}

\newtheorem{proposition}{Proposition}[section]

\newtheorem{theorem}[proposition]{Theorem}

\theoremstyle{definition}

\newtheorem{definition}[proposition]{Definition}
\newtheorem{remark}[proposition]{Remark}

\newcommand{\rng}[1]{\langle #1 \rangle}
\newcommand{\C}{\mathscr{C}}
\renewcommand{\P}{\mathscr{P}} 
\renewcommand{\L}{\mathscr{L}} 
\newcommand{\B}{{\mathscr B}}  
\newcommand{\E}{{\mathscr E}}  
\newcommand{\F}{{\mathbb F}}

\newcommand{\GF}{{\mathrm{GF}}}
\newcommand{\GL}{{\mathrm{GL}}}
\newcommand{\PG}{{\mathrm{PG}}}
\newcommand{\Z}{\mathbb{Z}}
\newcommand{\mm}{\mathcal{M}}

\DeclareMathOperator{\gen}{gen}
\DeclareMathOperator{\rank}{rank}

\DeclareMathOperator{\Ker}{Ker}
\DeclareMathOperator{\Img}{Im}
\DeclareMathOperator{\Id}{Id}

\begin{document}

\title[Unital compressed commuting graph of $3 \times 3$ matrices]{Unital compressed commuting graph of $3 \times 3$ matrices over a finite prime field}

\author[I.-V. Boroja]{Ivan-Vanja Boroja \orcidlink{0009-0009-2036-3232}}
\address{University of Banja Luka, Faculty of Natural sciences and Mathematics, Bosnia and Herzegovina}
\email{ivan-vanja.boroja@pmf.unibl.org}

\author[D. Kokol Bukov\v sek]{Damjana Kokol Bukov\v{s}ek \orcidlink{0000-0002-0098-6784}}
\address{University of Ljubljana, School of Economics and Business, and Institute of Mathematics, Physics and Mechanics, Ljubljana, Slovenia}
\email{damjana.kokol.bukovsek@ef.uni-lj.si}

\author[N. Stopar]{Nik Stopar \orcidlink{0000-0002-0004-4957}}
\address{University of Ljubljana, Faculty of Civil and Geodetic Engineering, University of Ljubljana, Faculty of Mathematics and Physics, and Institute of Mathematics, Physics and Mechanics, Ljubljana, Slovenia}
\email{nik.stopar@fgg.uni-lj.si}

\begin{abstract}

In this paper we completely describe the unital compressed commuting graph of the ring $\mm_3(\GF(p))$ of $3 \times 3$ matrices over the finite prime field $\GF(p)$.
To achieve this we combine methods from linear algebra, field theory, projective geometry and combinatorics.
We first partition the set of vertices into types based on the Jordan form and describe the neighborhood of each vertex.
The key part of the graph, i.e., the subgraph that corresponds to non-scalar derogatory matrices, is then determined using a bijective correspondence between its vertices and point-line pairs in the projective plane over $\GF(p)$. At the end we explain how the remaining vertices are attached to the key part. We also give an algorithm to construct the whole graph.
As a consequence, we describe the usual commuting graph $\Gamma(\mm_3(\GF(p)))$, whose structure was an open problem for several years.      
\end{abstract}

\keywords{commuting graph, unital compressed commuting graph, matrix ring, centralizer, finite field, projective plane}
\subjclass[2020]{15A27, 05C50, 51E15}

\maketitle

\section{Introduction}\label{sec:Intro}

One of the most important notions in algebra is the notion of commutativity. In recent years it has been studied extensively especially in the context of matrices over fields. In the literature there are various approaches to investigating matrix commutativity, such as studying centralizers \cite{DoGuKuOb13,DoGuKuOb14}, commutators \cite{Go10,Br20}, commutativity preservers \cite{OmRaSe01,Se05,ChGoTo25}, etc.

One of the recent approaches is to visualize the relation of commutativity using a graph, the so called \emph{commuting graph} of a ring, that was introduced in \cite{AkGhHaMo04}.
Given a ring $R,$ the commuting graph $\Gamma(R)$ is a simple graph whose vertices are non-central elements of the ring $R$ and two different elements $a$ and $b$ from the ring are connected by an edge if and only if $ab = ba.$ Over the past two decades the commuting graph of a ring was investigated by many researchers who studied connectedness \cite{AkBiMo, DoD19},  diameter and girth \cite{AkMoRaRa06, DoKoKu18, Sh16}, clique number \cite{AkGhHaMo04}, etc. Some authors also investigate the complement of this graph \cite{ErKhNa15}.

One of the central problems in the theory is the isomorphism problem, which asks whether the graph $\Gamma(R)$ uniquely determines the ring $R$, see \cite{AkBiMo}.
For the ring of $n \times n$ matrices over a field this question has a positive answer when $n=2$ and $n=3$ as shown in \cite{Mo10} and \cite{DoMa24}. Also, when $n=2^k3^l$ with $k\geq 1$ a positive answer is given in \cite{DoH19}. For other values of $n$ this is still an open problem.

In \cite{BoDoKoSt23} the compression of the graph $\Gamma(R)$ was studied and the (unital) compressed commuting graph of a ring was introduced.
The unital compressed commuting graph $\Lambda^1(R)$ of a unital ring $R$ is a graph whose vertices are equivalence classes of elements of $R$, with respect to the equivalence relation defined by $a \sim b$ if and only if elements $a$ and $b$ generate the same unital subring of $R$.
Two vertices are adjacent if their respective representatives commute in $R$ (see Definition~\ref{def:UCCG} for details).
The motivation for the compression is to make the graph smaller and thus more manageable, see \cite{BoKoSt24}. Furthermore, it was shown in \cite{BoDoKoSt23} that $\Lambda^1$ can be extended to a functor from the category of unital rings
to the category of undirected simple graphs with loops.
Hence, the compressed commuting graph of a ring $R$ contains also information about commutativity relation in homomorphic images of $R$.
This additional information may be useful when solving the isomorphism problem for matrix rings.

The (unital) compressed commuting graph of the ring of $2 \times 2$ matrices over a finite field was described in \cite{BoDoKoSt23}. In this paper we study the unital compressed commuting graph of the ring $\mm_3(\GF(p))$ of $3 \times 3$ matrices over the finite prime field $\GF(p)$.
In order to describe the vertex set of $\Lambda^1(\mm_3(\GF(p)))$ we show that matrices that are compressed into a single vertex always have the same Jordan structure.
This allows us to describe the vertices case by case and investigate the neighborhood of each vertex.
The key part of the graph $\Lambda^1(\mm_3(\GF(p)))$ is a subgraph that corresponds to non-scalar derogatory matrices. We describe this subgraph by establishing a bijective correspondence between the set of its vertices and the set of point-line pairs in the projective plain over $\GF(p).$ With the help of this correspondence, we then describe the edges between the vertices of this subgraph using the geometry of point-line pairs.  
The rest of the graph is obtained by consecutively attaching vertices corresponding to non-derogatory matrices.
This enables us to give an explicit algorithm for the construction of the entire unital compressed commuting graph of the ring $\mm_3(\GF(p))$. As an application, we are also able to construct the ordinary commuting graph $\Gamma(\mm_3(\GF(p)))$, by blowing-up the vertices of $\Lambda^1(\mm_3(\GF(p)))$ into cliques and removing the center of the ring and all the loops. This algorithmically describes the structure of $\Gamma(\mm_3(\GF(p)))$, which was an open problem for several years.

The paper is organized as follows.
We start by recalling the definitions and some basic properties of the unital compressed commuting graph in Section~\ref{sec:Preliminaries} and recall some results on matrices.
In Section~\ref{sec:Vertices} we describe the vertex set of the unital compressed commuting graph of $\mm_3(\GF(p))$.
The neighborhoods of vertices are considered in Section~\ref{sec:Neighborhoods}.
In Section~\ref{sec:B-E_graph} we describe the key part of the graph, and in Section~\ref{sec:Description} we give the algorithm for the construction of $\Lambda^1(\mm_3(\GF(p)))$.
In final Section~\ref{sec:CG} we apply our results and describe the graph $\Gamma(\mm_3(\GF(p)))$.

\section{Preliminaries}\label{sec:Preliminaries}

We will assume throughout that $R$ is a unital ring with identity element $1$. We first recall the definition of the commuting graph $\Gamma(R)$ and the unital compressed commuting graph $\Lambda^1(R)$.

\begin{definition}[\cite{AkGhHaMo04}]\label{def:CG}
A \emph{commuting graph} of a unital ring $R$ is an undirected graph $\Gamma(R)$ whose vertex set is the set of all non-central elements of $R$ and there is an edge between two different elements $a$ and $b$ if and only if $ab=ba$.
\end{definition}

A unital subring of $R$ generated by an element $a \in R$ is given by
\begin{equation*}
    \rng{a}_1=\{q(a) ~|~ q \in \Z[x]\},
\end{equation*}
where $\Z[x]$ denotes the ring of polynomials with integer coefficients and in the evaluation of $q(a)$ the constant term is multiplied by the identity element $1 \in R$.
We introduce an equivalence relation $\sim$ on $R$ defined by $a \sim b$ if and only if $\rng{a}_1=\rng{b}_1$, and denote the equivalence class of an element $a \in R$ with respect to relation $\sim$ by $[a]_1$.
By definition $[a]_1$ consists of all single generators of the ring $\rng{a}_1$.

\begin{definition}[\cite{BoDoKoSt23}]\label{def:UCCG}
A \emph{unital compressed commuting graph} of a unital ring $R$ is an undirected graph $\Lambda^1(R)$ whose vertex set is the set of all equivalence classes of elements of $R$ with respect to relation $\sim$ and there is an edge between $[a]_1$ and $[b]_1$ if and only if $ab=ba$.
\end{definition}
 
The central elements of $R$ are not excluded from the graph $\Lambda^1(R)$ and loops are allowed, in fact, every vertex of $\Lambda^1(R)$ has a single loop on it.
In \cite{BoDoKoSt23} a non-unital version of the compressed commuting graph was also introduced, however, here we will only be interested in the unital version. 

The mapping $\Lambda^1$ can be extended to a functor $\Lambda^1$ from the category $\mathbf{Ring^1}$ of unital rings and unital ring homomorphisms to the category $\mathbf{Graph}$ of undirected simple graphs that allow loops and graph homomorphisms. 
In particular, for a unital ring homomorphism $f \colon R \to S$, where $R$ and $S$ are unital rings, the graph homomorphism $\Lambda^1(f) \colon \Lambda^1(R) \to \Lambda^1(S)$ is defined by $\Lambda^1(f)([r]_1)=[f(r)]_1$, see \cite{BoDoKoSt23}.
For some further properties of the unital compressed commuting graph we refer the reader to \cite{BoDoKoSt23,BoKoSt24}.

Note that each vertex of $\Lambda^1(R)$ corresponds to a subring of $R$ generated by one element. This means that we can equivalently define the unital compressed commuting graph of $R$ as an undirected graph whose vertex set is the set
\begin{equation*}
    V(\Lambda^1(R))=\{\rng{a}_1 \mid a \in R\}
\end{equation*}
of all unital subrings of $R$ generated by one element, and vertices $\rng{a}_1$ and $\rng{b}_1$ are connected by an edge if and only if $ab=ba$.
In the rest of the paper we will adopt this view and consider vertices of $\Lambda^1(R)$ as unital subrings of $R$ generated by one element. Note that the size of the equivalence class $[a]_1$ equals the number of different single generators of the vertex $\rng{a}_1$.

In this paper we are going to be interested in the ring of $n \times n$ matrices over a field $\F$, which we denote by $\mm_n(\F)$.
The \emph{centralizer} of a matrix $A\in \mm_n(\F)$ is given by
\[ \C(A) = \{ X\in \mm_n(\F) \mid AX = XA  \}.\]
Recall that a matrix is called \emph{non-derogatory} if its minimal polynomial equals its characteristic polynomial up to a sign. A matrix $A$ is non-derogatory if and only if $\C(A)=\F[A]$, where $\F[A]$ is the subalgebra of $\mm_n(\F)$ generated by $A$. In this paper we will be interested in the case when $\F=\GF(p)$, the finite field with $p$ elements, where $p$ is a prime. In this case $\F[A]=\rng{A}_1$. It follows that $\C(A)=\rng{A}_1$ for any non-derogatory matrix $A \in \mm_n(\GF(p))$.

We denote by $\GL_n(\F)$ the group of all invertible matrices in $\mm_n(\F)$ and by
$$\mathcal{O}(A)=\{ SAS^{-1} \mid S\in \GL_n(\F)\}$$
the orbit of a matrix $A \in \mm_n(\F)$ under similarity.
We will need the following well-known formula for the size of the orbit.

\begin{proposition} \label{prop:NumberofmatricesSIMILARtoGIVENmatrix} Let $A\in \mm_n(\F)$, where $\F$ is a finite field. Then 
\[|\mathcal{O}(A)|=\frac{\big|\GL_n(\F)\big|}{\big| \C(A) \cap \GL_n(\F) \big|}.\]
\end{proposition}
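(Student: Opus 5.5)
This is the orbit--stabilizer theorem for the conjugation action of $\GL_n(\F)$ on $\mm_n(\F)$. I will set up the action, identify the stabilizer of $A$, and count cosets. Let $\GL_n(\F)$ act on $\mm_n(\F)$ by $S\cdot A = SAS^{-1}$. This is a group action, since $I\cdot A=A$ and $(ST)\cdot A = S(TAT^{-1})S^{-1} = S\cdot(T\cdot A)$. The orbit of $A$ under this action is exactly $\mathcal{O}(A)$.

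Next I identify the stabilizer $\mathrm{Stab}(A)=\{S\in\GL_n(\F)\mid SAS^{-1}=A\}$. For invertible $S$, the condition $SAS^{-1}=A$ is equivalent to $SA=AS$, which means $S\in\C(A)$. Hence $\mathrm{Stab}(A)=\C(A)\cap\GL_n(\F)$, and it is a subgroup of $\GL_n(\F)$.

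Finally I apply orbit--stabilizer. Define the map from the set of left cosets of $H=\C(A)\cap\GL_n(\F)$ to $\mathcal{O}(A)$ by $SH\mapsto SAS^{-1}$. It is well defined and injective, because $SAS^{-1}=TAT^{-1}$ holds if and only if $T^{-1}S\in H$, that is, $SH=TH$. It is surjective by the definition of $\mathcal{O}(A)$. Since $\F$ is finite, $\GL_n(\F)$ is finite, so Lagrange's theorem gives $|\mathcal{O}(A)| = [\GL_n(\F):H] = |\GL_n(\F)|/|H|$.

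There is no real obstacle here. The only point needing care is the equivalence between $SAS^{-1}=A$ and $SA=AS$, which uses that $S$ is invertible, and finiteness of $\F$ is needed only to make the cardinalities finite.
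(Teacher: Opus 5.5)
Your proof is correct. It is the orbit--stabilizer theorem for the conjugation action of $\GL_n(\F)$, with the stabilizer of $A$ identified as $\C(A)\cap\GL_n(\F)$. The paper gives no proof of its own and cites the formula as well known, and your argument is the standard justification behind it.
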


Recall also that for a finite field $\F=GF(q)$, with $q=p^m$ elements the number of invertible matrices in $\mm_n(\F)$ is equal to 
\[ |\GL_n(\F)| = (q^n-1)(q^n-q)\cdots(q^n-q^{n-1}).\]

In \cite[Theorem~21]{BoDoKoSt23} the unital compressed commuting graph of the ring $\mm_2(\F)$ was described for any finite field $\F$. In the case when $\F=\GF(p)$ the result reads as follows.

\begin{proposition}[\cite{BoDoKoSt23}] \label{thm:UCCGM2x2} Let $p$ be a prime number. Then the unital compressed commuting graph of the ring $\mm_2(\GF(p))$ is a star graph with $p^2+p+1$ leaves and all the loops.
\end{proposition}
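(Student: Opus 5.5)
Since $\GF(p)$ is the prime field, the constant term of any $q \in \Z[x]$ evaluates to an element of $\GF(p)\cdot I$. So for every $A\in\mm_2(\GF(p))$ we have $\rng{A}_1=\GF(p)[A]$. The plan is therefore to classify the unital subalgebras $\GF(p)[A]$ of $\mm_2(\GF(p))$ and then read off commutativity.

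\emph{Vertices.} If $A$ is scalar, then $\rng{A}_1=\GF(p)I$, which is a single vertex $c$ (the center). If $A$ is non-scalar, its minimal polynomial has degree $2$, so it is non-derogatory. Then $\GF(p)[A]=\C(A)=\mathrm{span}\{I,A\}$ is a $2$-dimensional subalgebra containing $I$. Conversely, every $2$-dimensional unital subalgebra $\mathrm{span}\{I,B\}$ with $B$ non-scalar has this form. Thus the non-central vertices correspond bijectively to $2$-dimensional subspaces of $\mm_2(\GF(p))$ that contain $I$ and are closed under multiplication. The key observation is that every $2$-dimensional subspace $W\ni I$ is automatically a subalgebra. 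Indeed, $W=\mathrm{span}\{I,B\}$, and by Cayley--Hamilton $B^2=\operatorname{tr}(B)B-\det(B)I\in W$. Hence the non-central vertices correspond to lines in the quotient space $\mm_2(\GF(p))/\GF(p)I\cong\GF(p)^3$. There are exactly $(p^3-1)/(p-1)=p^2+p+1$ such lines.

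\emph{Edges.} The central vertex commutes with everything, so $c$ is adjacent to every vertex, and every vertex carries its loop. Now take distinct non-central vertices $\rng{A}_1\neq\rng{B}_1$. If $AB=BA$, then $B\in\C(A)=\GF(p)[A]$, so $\rng{B}_1\subseteq\rng{A}_1$. Both are $2$-dimensional, so they are equal, which is a contradiction. Hence distinct non-central vertices are never adjacent.

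Together these give a star centered at $\rng{I}_1$ with $p^2+p+1$ leaves, plus all the loops. No step seems hard. The only point needing care is the Cayley--Hamilton observation that every $2$-dimensional subspace containing $I$ is singly generated, which is what makes the leaf count exactly $p^2+p+1$.
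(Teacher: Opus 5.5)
Your proof is correct and complete. There is no in-paper proof to compare against: the paper quotes this proposition from \cite{BoDoKoSt23}, as the $\GF(p)$ case of a result for arbitrary finite fields.

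Your route does differ from the tools the paper uses for $3\times 3$ matrices, mainly in how you count the leaves. In the spirit of cases (G) and (H) of Section~\ref{sec:Vertices}, one would count by generators. The $p^4-p$ non-scalar matrices are partitioned among the vertices $\GF(p)[A]=\mathrm{span}\{I,A\}$, and each such vertex has exactly $p^2-p$ single generators (the elements $aI+bA$ with $b\neq 0$). This gives $(p^4-p)/(p^2-p)=p^2+p+1$ leaves.

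Equivalently, you could start from the decomposition of $\Gamma(\mm_2(\F))$ into $|\F|^2+|\F|+1$ disjoint cliques, recalled in Section~\ref{sec:CG}. Over the prime field, each clique $\F[A]\setminus \F I$ consists of the generators of a single unital subring, so it compresses to one leaf.

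Your identification of the leaves with the lines of $\mm_2(\GF(p))/\GF(p)I\cong \GF(p)^3$, that is, with the points of $\PG(2,p)$, needs no generator counting and gives an explicit bijection. It also fits the projective viewpoint the paper adopts for the $3\times 3$ case.

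The Cayley--Hamilton step you single out as needing care is not an extra ingredient. You have already shown that $\GF(p)[B]=\mathrm{span}\{I,B\}$ for every non-scalar $B$. Hence every $2$-dimensional subspace containing $I$ is automatically $\rng{B}_1$ for any non-scalar $B$ in it.
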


\section{Vertex set of \texorpdfstring{$\Lambda^1(\mm_3(\GF(p)))$}{Λ¹(ℳ₃(\GF(p)))}}\label{sec:Vertices}

We assume henceforth that $p$ is a prime. We will be working with the ring of $3\times 3$ matrices over the field $\GF(p)$.
Our goal is to describe the structure of the unital compressed commuting graph of $\mm_3(\GF(p))$ and we start by describing the vertices of $\Lambda^1(\mm_3(\GF(p)))$.

Let $A \in \mm_3(\GF(p))$ be any matrix. We consider several cases depending on how the characteristic polynomial $p_A$ and minimal polynomial $m_A$ of $A$ split over the field $\GF(p)$, namely:
\begin{enumerate}
    \item[$\mathrm{(A)}$] $p_A(x) = (x-\lambda)^3$ and $m_A(x) = (x-\lambda)$, where $\lambda \in \GF(p)$,
    \item[$\mathrm{(B)}$] $p_A(x) = (x-\lambda)(x-\mu)^2$ and $m_A(x) = (x-\lambda)(x-\mu)$, where $\lambda, \mu \in \GF(p)$ and $\lambda \ne \mu$,
    \item[$\mathrm{(C)}$] $p_A(x) = m_A(x) = (x-\lambda)(x-\mu)(x-\nu)$, where $\lambda, \mu, \nu \in \GF(p)$ are all different, 
    \item[$\mathrm{(D)}$] $p_A(x) = m_A(x) = (x-\lambda)^3$, where $\lambda\in \GF(p)$, 
    \item[$\mathrm{(E)}$] $p_A(x) = (x-\lambda)^3$ and $m_A(x) = (x-\lambda)^2$, where $\lambda\in \GF(p)$,
    \item[$\mathrm{(F)}$] $p_A(x) = m_A(x) = (x-\lambda)^2(x-\mu)$, where $\lambda, \mu\in \GF(p)$ and $\lambda \ne \mu$,
    \item[$\mathrm{(G)}$] $p_A(x) = m_A(x)$ is irreducible over $\GF(p)$, 
    \item[$\mathrm{(H)}$] $p_A(x) = m_A(x) = (x-\lambda)p_1(x)$, where $\lambda\in \GF(p)$ and $p_1$ is irreducible over $\GF(p)$. 
\end{enumerate}

\smallskip
For any $\mathrm{X} \in \{\mathrm{A}, \mathrm{B}, \ldots, \mathrm{H}\}$ we will refer to matrices from case $\mathrm{(X)}$ as matrices of \emph{type} $\mathrm{(X)}$. Next proposition shows that all generators of a vertex in the unital compressed commuting graph are matrices of the same type, so we can also talk about a \emph{vertex of type} $\mathrm{(X)}$.

\begin{proposition}\label{prop:MAT2VERTICES3x3} Suppose $A$ and $B$ are two matrices from $\mm_3(\GF(p)).$ If $\rng{A}_1 = \rng{B}_1$ then $A$ and $B$ are of the same type. \end{proposition}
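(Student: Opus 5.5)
The plan is to read off the type of a matrix $A$ from invariants of the subring $\rng{A}_1 = \GF(p)[A]$ as an abstract subalgebra of $\mm_3(\GF(p))$. Since $\GF(p)$ is a prime field, $\rng{A}_1 = \GF(p)[A] \cong \GF(p)[x]/(m_A(x))$. So if $\rng{A}_1=\rng{B}_1 =: R$, then $R$ is simultaneously isomorphic to $\GF(p)[x]/(m_A)$ and $\GF(p)[x]/(m_B)$. Moreover, $R$ comes with its embedding into $\mm_3(\GF(p))$, i.e. with the structure of the $R$-module $\GF(p)^3$. Both the algebra and the module will be used.

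First I separate the non-derogatory types from the derogatory ones via $\dim_{\GF(p)} R = \deg m_A$. This dimension is $1$ for type (A), $2$ for types (B) and (E), and $3$ for types (C), (D), (F), (G), (H). This already isolates (A) and reduces the problem to two subgroups of types.

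Next I distinguish the types within each subgroup by the isomorphism type of the algebra $R \cong \GF(p)[x]/(m_A)$. By the Chinese remainder theorem this algebra is a product of local rings $\GF(p)[x]/(q_i^{k_i})$, where $m_A=\prod q_i^{k_i}$. The multiset of pairs $(\deg q_i, k_i)$ is an isomorphism invariant: the number of maximal ideals gives the number of factors, the residue fields $\GF(p^{\deg q_i})$ give the degrees, and the nilpotency index of the radical of each local factor gives $k_i$.
\begin{itemize}
\item In dimension $2$: type (B) gives $\GF(p)\times\GF(p)$, which is reduced, while type (E) gives $\GF(p)[x]/(x^2)$, which has nonzero nilpotents.
\item In dimension $3$ the invariants are $(1,1)^3$ for (C), $(1,3)$ for (D), $(1,2),(1,1)$ for (F), $(3,1)$ for (G), and $(1,1),(2,1)$ for (H). These are pairwise distinct.
\end{itemize}
Concretely, I would argue with reducedness, the number of primitive idempotents, and the fields appearing as factors. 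Hence $m_A$ and $m_B$ have the same factorization pattern, and within each dimension the pattern determines the type.

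The only step needing care is that within dimension $2$ the factorization pattern of $m_A$ does not by itself fix $p_A$. For example, $m_A=(x-\lambda)(x-\mu)$ could a priori come with an irreducible quadratic factor instead. This is excluded because a $3\times3$ derogatory matrix over $\GF(p)$ must have $m_A$ of degree $2$ dividing $p_A$, and every elementary divisor divides $m_A$. An irreducible quadratic $m_A$ would force elementary divisors of degree $2$ summing to $3$, which is impossible. So in the derogatory case $m_A$ splits over $\GF(p)$, and the list (A)–(H) is exhaustive. The pattern of $m_A$ then determines the type, as in the explicit case list. I expect this exhaustiveness bookkeeping to be the main (mild) obstacle. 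The rest is the ring-theoretic invariant computation above.
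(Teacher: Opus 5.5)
Your proof is correct, but it takes a different route from the paper's.

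\textbf{The paper's route.} It works directly with the polynomial relations $B=q(A)$ and $A=r(B)$. It extracts two numerical invariants: $d=\deg m_A$, and the number $e$ of distinct eigenvalues of $A$ lying in $\GF(p)$. It checks that $A$ and $B$ share both, and rules out the pairs $(1,0)$ and $(2,0)$. In the case $(d,e)=(3,1)$ it needs a separate eigenvalue argument to tell type (D) from type (H): an eigenvalue of $A$ outside $\GF(p)$ forces one for $B$ via $\alpha=r(\beta)$.

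\textbf{Your route.} You use the full isomorphism invariant of the commutative algebra $\rng{A}_1\cong\GF(p)[x]/(m_A)$, namely the multiset of (residue degree, nilpotency index) of its local factors. This separates all eight types uniformly, with no special subcase. The paper's $e$ is just the number of local factors with residue field $\GF(p)$ (equivalently, the number of ring homomorphisms $\rng{A}_1\to\GF(p)$). So the paper's invariants are a coarsening of yours.

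\textbf{Trade-off.} Your route directly yields a slightly stronger statement: two $3\times 3$ matrices generating merely \emph{isomorphic} unital subrings already have the same type. The cost is invoking the uniqueness of the decomposition of a finite commutative ring into local rings. The paper's argument is elementary eigenvalue bookkeeping.

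Both proofs need the same exhaustiveness fact, that an irreducible quadratic cannot be the minimal polynomial of a $3\times 3$ matrix, and you handle it correctly. Beyond this you never actually use the $\rng{A}_1$-module structure of $\GF(p)^3$ that you announce; you only use the fact that $n=3$.
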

\begin{proof}
    First note that $\deg m_A = \deg m_B.$ We denote this degree by $d$. As $\rng{A}_1 = \rng{B}_1$ we know that there exist polynomials $q$ and $r$ such that $B=q(A)$ and $A=r(B).$ This implies that matrices $A$ and $B$ have the same number of distinct eigenvalues in $\GF(p).$ We will denote this number by $e.$ Note that $0 \leq e \leq d \leq 3.$ Now, we consider the cases based on the values of $d$ and $e.$

    Note that the pair $(d,e) = (1,0)$ is not possible as linear polynomial always has a zero in the field, i.e., if $d=1$ then $e=1.$ Furthermore, the pair $(d,e) = (2,0)$ is not possible because it would mean that the characteristic polynomial has a double zero $\lambda_1.$ Since this is a zero of minimal polynomial it is an element of $\GF(p^2)\setminus \GF(p)$. But then the second zero $\lambda_2$ of the minimal polynomial must be double as well, which is not possible.
    
    If $(d,e) = (1,1)$ then matrices $A$ and $B$ are of type $\mathrm{(A)}$. Similarly, if $(d,e) = (2,1)$ they are of type $\mathrm{(E)}$, if $(d,e) = (2,2)$ they are of type $\mathrm{(B)}$, if $(d,e) = (3,0)$ they are of type $\mathrm{(G)}$, if $(d,e) = (3,2)$ they are of type $\mathrm{(F)}$, if $(d,e) = (3,3)$ they are of type $\mathrm{(C)}$. Finally, if $(d,e) = (3,1)$ we distinguish two subcases. If matrix $A$ has all eigenvalues in $\GF(p)$ then so does $B$ because $B=q(A).$ In this case both matrices are of type $\mathrm{(D)}$. On the other hand, if $A$ has an eigenvalue $\alpha \notin \GF(p)$ then so does $B$, because $\alpha = r(\beta)$ for some eigenvalue $\beta$ of $B$ and clearly $\beta \notin \GF(p).$  So, both matrices are of type $\mathrm{(H)}$.
\end{proof}

The above proposition allows us to define the following.

\begin{definition}
Define $V_{\mathrm{(X)}} \subseteq V(\Lambda^1(\mm_3(\GF(p))))$ as the set of vertices of type $\mathrm{(X)}$.
\end{definition}

The following proposition gives a formula for calculating the number of vertices of a certain type in $\Lambda^1(\mm_3(\GF(p))$ under a technical condition.

\begin{proposition}\label{prop:V_(X)}
    Suppose we have a vertex of type $\mathrm{(X)}$, generated by a matrix $A$ of order $3$. Assume that $\mathcal{O}(A)$ intersects every vertex of type $\mathrm{(X)}$ and let \begin{equation}
        \omega_A = |\rng{A}_1 \cap \mathcal{O}(A)|.
    \end{equation} Then 
    \begin{equation}\label{eq:N_B}
        |V_{\mathrm{(X)}}| = \frac{|\mathcal{O}(A)|}{\omega_A} =\frac{|\GL_3(\GF(p))|}{|\C(A) \cap \GL_3(\GF(p))|\cdot \omega_A} .
    \end{equation} 
\end{proposition}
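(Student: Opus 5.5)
We count the vertices of type $\mathrm{(X)}$ by a double-counting argument: we partition the orbit $\mathcal{O}(A)$ according to which vertex each of its elements generates, and show that every block of this partition has the same size $\omega_A$. The second equality in \eqref{eq:N_B} then follows at once from Proposition~\ref{prop:NumberofmatricesSIMILARtoGIVENmatrix}.

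We consider the map $\Phi\colon \mathcal{O}(A)\to V_{\mathrm{(X)}}$, $\Phi(B)=\rng{B}_1$. Since similar matrices have the same characteristic and minimal polynomials, every $B\in\mathcal{O}(A)$ has the same type as $A$, so $\Phi$ lands in $V_{\mathrm{(X)}}$. By hypothesis $\mathcal{O}(A)$ meets every vertex of type $\mathrm{(X)}$. A vertex $\rng{C}_1$ containing some $B\in \mathcal{O}(A)$ need not be generated by $B$, so we first show that it is. Suppose $B=SAS^{-1}\in\rng{C}_1$. Then $\rng{B}_1\subseteq \rng{C}_1$, and $\rng{B}_1=S\rng{A}_1S^{-1}$ has the same cardinality as $\rng{A}_1$. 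The matrix $C$ is of type $\mathrm{(X)}$ and the hypothesis gives some $B'\in\mathcal{O}(A)\cap \rng{C}_1$, but this only shows that $\rng{C}_1$ contains a conjugate of $A$. To get around this, we instead interpret the hypothesis as saying that every vertex of type $\mathrm{(X)}$ is generated by some element of $\mathcal{O}(A)$, so that $\Phi$ is surjective. We would justify this reading by observing that $\rng{A}_1$ and $\rng{C}_1$ are both $\F[\,\cdot\,]$-algebras whose dimension is the degree of the minimal polynomial, and this degree is fixed by the type. Hence $B\in \rng{C}_1$ with $\dim \rng{B}_1=\dim\rng{C}_1$ forces $\rng{B}_1=\rng{C}_1$. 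So $\Phi$ is surjective.

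Next we compute the fibre of $\Phi$ over a vertex. For $B_0=SAS^{-1}$ we have
\[
\Phi^{-1}(\rng{B_0}_1)=\{B\in\mathcal{O}(A)\mid \rng{B}_1=\rng{B_0}_1\}.
\]
We claim this equals $\rng{B_0}_1\cap\mathcal{O}(A)$. The inclusion $\subseteq$ is clear. Conversely, if $B\in \rng{B_0}_1\cap \mathcal{O}(A)$, then $\rng{B}_1\subseteq\rng{B_0}_1$. Both rings are finite of the same size, being conjugates of $\rng{A}_1$, so they are equal. Finally, conjugation by $S$ is a bijection from $\rng{A}_1\cap\mathcal{O}(A)$ onto $\rng{B_0}_1\cap\mathcal{O}(A)$, because $\mathcal{O}(A)$ is conjugation invariant. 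Therefore every fibre has size $\omega_A$, and $|\mathcal{O}(A)|=\omega_A\,|V_{\mathrm{(X)}}|$.

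The main obstacle is the surjectivity step, namely pinning down exactly what "$\mathcal{O}(A)$ intersects every vertex" yields. We resolve it with the size and dimension argument above: a sub-unital-ring of $\rng{C}_1$ of equal cardinality must coincide with $\rng{C}_1$, and equality of cardinalities follows from both being $\F$-algebras of dimension $\deg m$, which is the same for all matrices of type $\mathrm{(X)}$.
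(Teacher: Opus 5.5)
Your proof is correct and is essentially the paper's argument. Conjugation by $S$ identifies $\rng{A}_1\cap\mathcal{O}(A)$ with $\rng{M}_1\cap\mathcal{O}(A)$, and every element of the latter generates $\rng{M}_1$ because the dimensions (equivalently, the sizes) agree, so $\mathcal{O}(A)$ splits into blocks of size $\omega_A$, one per vertex. The only difference is that you spell out how the hypothesis makes $B\mapsto\rng{B}_1$ surjective, which the paper leaves implicit. This is not a reinterpretation of the hypothesis: your degree-of-minimal-polynomial argument derives it directly from the literal statement that $\mathcal{O}(A)$ meets every vertex of type $\mathrm{(X)}$.
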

\begin{proof}
As shown in the proof of Proposition~\ref{prop:NumberofmatricesSIMILARtoGIVENmatrix}, some of the matrices similar to the matrix $A$ lie inside the subring $\rng{A}_1$, but some of them do not. Those which do not will be generators of the isomorphic copies of the subring $\rng{A}_1$. To count the number of vertices of type $\mathrm{(X)}$, we will need to count how many matrices similar to $A$ lie in the subring $\rng{A}_1.$

Let $M \in \mathcal{O}(A)$ be arbitrary. Then $M$ is similar to $A,$ i.e., there exists an invertible matrix $S$ such that $M=SAS^{-1}.$ The conjugation mapping $Y \mapsto SYS^{-1}$ is bijection from set $\rng{A}_1 \cap \mathcal{O}(A)$ to the set $\rng{M}_1 \cap \mathcal{O}(A)$, so all of the sets $\rng{M}_1 \cap \mathcal{O}(A)$, $M\in \mathcal{O}(A)$, are of the same cardinality and this cardinality is equal to $\omega_A.$  

Note that any $W$ from $\rng{M}_1 \cap \mathcal{O}(A)$ is automatically a generator of $\rng{M}_1$. This is because $W$ is similar to $A$ and hence similar to $M$, so three of them have the same degree of minimal polynomial. Since $W$ is from $\rng{M}_1$ this implies that $W$ is a generator of $\rng{M}_1$. Hence, matrices in $\rng{M}_1\cap \mathcal{O}(A)$ are compressed into the same vertex.

It follows that the number of vertices obtained from matrices of type $\mathrm{(X)}$ is equal to 
\begin{equation*}
    |V_{\mathrm{(X)}}| = \frac{|\mathcal{O}(A)|}{\omega_A}.
\end{equation*}   
Equation \eqref{eq:N_B} now follows from Proposition~\ref{prop:NumberofmatricesSIMILARtoGIVENmatrix}.
\end{proof}

For each matrix $A \in \mm_3(\GF(p))$ of a certain type $\mathrm{(X)}$ we compute $\dim\rng{A}_1$, the number of generators of $\rng{A}_1$, and the cardinality of $V_{\mathrm{(X)}}$. Since the dimension and the number of generators of $\rng{A}_1$ does not change under similarity, we may assume that matrix $A$ is in Jordan form.

\medskip

{\bf Case (A)}: Assume that matrix $A$ satisfies $p_A(x) = (x-\lambda)^3$ and $m_A(x) = (x-\lambda)$, where $\lambda \in \GF(p)$. Then $A$ is a scalar matrix, the subring generated by $A$ is the subring of all scalar matrices and every element of this subring is a generator. Hence, $|V_{\mathrm{(A)}}|=1$, $\dim \rng{A}_1 =1$, and $\rng{A}_1$ has $p$ generators.

\medskip

{\bf Case (B)}: Assume that matrix $A$ satisfies $p_A(x) = (x-\lambda)(x-\mu)^2$ and $m_A(x) = (x-\lambda)(x-\mu)$, where $\lambda, \mu \in \GF(p)$ and $\lambda \ne \mu$. It follows that matrix $A$ is diagonalizable, so we assume it is diagonal, i.e., $A=\lambda E_{11}+\mu(E_{22}+E_{33})$.
It follows that $\dim \rng{A}_1 = 2$ and
\begin{equation*}
  \rng{A}_1=\{\alpha E_{11}+\beta(E_{22}+E_{33}) \mid \alpha, \beta \in \GF(p)\}.
\end{equation*}
Note that an element of $\rng{A}_1$ is a generator if and only if $\beta \neq \alpha$. So, the number of generators of $\rng{A}_1$ is $p(p-1).$

To calculate the cardinality of $V_{\mathrm{(B)}}$ we will use Proposition~\ref{prop:V_(X)}, so we need to show that ${\mathcal O}(A)$ intersects every vertex of type $\mathrm{(B)}$. Let $Y$ be an arbitrary matrix of type $\mathrm{(B)}$. This means that there exists an invertible matrix $S$ such that $\widehat{A}=SYS^{-1}$ is a generator of $\rng{A}_1$.
Hence, 
\begin{equation*}
    \rng{Y}_1 = \rng{S^{-1}\widehat{A}S}_1 = S^{-1}\rng{\widehat{A}}_1S = S^{-1}\rng{A}_1S = \rng{S^{-1}AS}_1,
\end{equation*}
so that $S^{-1}AS \in \rng{Y}_1 \cap \mathcal{O}(A).$
Furthermore, the centralizer of the matrix $A$ is
\begin{equation}\label{eq:CentralizerTypeB}
    \C(A)=\Bigg \{ \begin{bmatrix}
        a & 0 & 0 \\
        0 & e & f \\
        0 & h & i
    \end{bmatrix} \mid a,e,f,h,i \in \GF(p) \Bigg \}.
\end{equation}
Hence,
\begin{equation*}
    |\C(A) \cap \GL_3(\GF(p))|= |\GL_1(\GF(p))|\cdot |\GL_2(\GF(p))|=(p-1)(p^2-1)(p^2-p).
\end{equation*}
We now prove that $\omega_A=1.$ Let $M\in \rng{A}_1\cap \mathcal{O}(A)$ be arbitrary. Since $M\in \rng{A}_1$ we infer that $M=\alpha E_{11}+\beta(E_{22}+E_{33})$ for some $\alpha, \beta \in \GF(p).$ Since $M\in \mathcal{O}(A)$, we have $\alpha = \lambda$ and $\beta = \mu$. We conclude that $M=A$, which proves our claim.
By Proposition~\ref{prop:V_(X)} the number of vertices of type $\mathrm{(B)}$ is 
\[ |V_{\mathrm{(B)}}| = \frac{|\mathcal{O}(A)|}{\omega_A}=\frac{|\GL_3(\GF(p))|}{|\C(A) \cap \GL_3(\GF(p))|\cdot \omega_A} =\frac{(p^3-1)(p^3-p)(p^3-p^2)}{(p-1)(p^2-1)(p^2-p)\cdot 1}=(p^2+p+1)p^2.\] 

\medskip 

{\bf Case (C)}: Assume that matrix $A$ satisfies $p_A(x) = m_A(x) = (x-\lambda)(x-\mu)(x-\nu)$, where $\lambda, \mu, \nu \in \GF(p)$ are all different. It follows that matrix $A$ is diagonalizable, so we assume it is diagonal, i.e., $A=\lambda E_{11}+\mu E_{22}+\nu E_{33}$.
Note that in this case $p$ must be greater then or equal to $3$ because if $p=2$ a matrix cannot have three different eigenvalues in $\GF(p).$
It follows that $\dim \rng{A}_1 = 3$ and
\begin{equation*}
  \rng{A}_1=\{\alpha E_{11}+\beta E_{22}+\gamma E_{33} \mid \alpha, \beta, \gamma \in \GF(p)\}.
\end{equation*}
Note that an element of $\rng{A}_1$ is a generator if and only if $\alpha, \beta$ and $\gamma$ are all different. So, the number of generators of $\rng{A}_1$ is $p(p-1)(p-2).$

Note also that the subring $\rng{A}_1$ does not depend on the exact values of $\lambda$, $\mu$ and $\nu.$ This implies that we can use the same argument as in case $\mathrm{(B)}$ and conclude that ${\mathcal O}(A)$ intersects every vertex of type $\mathrm{(C)}$. This means that the assumption of Proposition~\ref{prop:V_(X)} is satisfied. 
Since matrix $A$ is non-derogatory we have $\C(A)=\rng{A}_1$, so that
\begin{equation*}
    |\C(A) \cap \GL_3(\GF(p))|= (p-1)^3.
\end{equation*}
Next, we calculate $\omega_A$.
Let $M\in \rng{A}_1 \cap \mathcal{O}(A)$. Then $M=\alpha E_{11}+\beta E_{22}+\gamma E_{33}$ and $\{\alpha,\beta,\gamma\}=\{\lambda, \mu, \nu\}$. It follows that $(\alpha,\beta,\gamma)$ is a permutation of $(\lambda, \mu, \nu)$, hence
$\omega_A = |\rng{A}_1 \cap \mathcal{O}(A)|=6$.
By Proposition~\ref{prop:V_(X)} we conclude that the number of vertices of type $\mathrm{(C)}$ is equal to
\begin{equation*}
\begin{aligned}
    |V_{\mathrm{(C)}}| &= \frac{|\mathcal{O}(A)|}{\omega_A}= \frac{|\GL_3(\GF(p))|}{|\C(A) \cap \GL_3(\GF(p))|\cdot \omega_A} = \frac{(p^3-1)(p^3-p)(p^3-p^2)}{(p-1)^3\cdot 6}\\ &= \tfrac{1}{6}(p^2+p+1)p^3(p+1).
\end{aligned}
\end{equation*}

\medskip

{\bf Case (D)}: Assume that matrix $A$ satisfies $p_A(x) = m_A(x) = (x-\lambda)^3$, where $\lambda\in \GF(p)$. We may assume that $A$ is in Jordan form, i.e., $A=\lambda I+ E_{12}+E_{23}$.
It follows that $\dim \rng{A}_1 = 3$ and
\begin{equation*}
  \rng{A}_1=\{\alpha I+ \beta(E_{12}+E_{23})+\gamma E_{13} \mid \alpha, \beta, \gamma \in \GF(p)\}.
\end{equation*}
Note that an element of $\rng{A}_1$ is a generator if and only if $\beta \ne 0$. So, the number of generators of $\rng{A}_1$ is $p^2(p-1).$
Once again the subring $\rng{A}_1$ does not depend on the value of $\lambda$, so, as in previous cases, the assumption of Proposition~\ref{prop:V_(X)} is fulfilled. 
Since matrix $A$ is non-derogatory we have $\C(A)=\rng{A}_1$, so that
\begin{equation*}
    |\C(A) \cap \GL_3(\GF(p))|= (p-1)p^2.
\end{equation*}
Next, we calculate $\omega_A$.
Let $M\in \rng{A}_1 \cap \mathcal{O}(A)$. Then $M=\alpha I+ \beta(E_{12}+E_{23})+\gamma E_{13}$. Note that $M$ is similar to $A$, if and only if $\alpha=\lambda$ and $\beta \ne 0$,
therefore,
\[\omega_A = |\rng{A}_1 \cap \mathcal{O}(A)|= (p-1)p.\]
Finally, the number of vertices of type $\mathrm{(D)}$ equals
\begin{equation*}
\begin{aligned}
    |V_{\mathrm{(D)}}| &= \frac{|\mathcal{O}(A)|}{\omega_A}= \frac{|\GL_3(\GF(p))|}{|\C(A) \cap \GL_3(\GF(p))|\cdot \omega_A} = \frac{(p^3-1)(p^3-p)(p^3-p^2)}{(p-1)p^2\cdot (p-1)p}\\ &= (p^3-1)(p+1).
\end{aligned}
\end{equation*}

\medskip

{\bf Case (E)}: Assume that matrix $A$ satisfies $p_A(x) = (x-\lambda)^3$ and $m_A(x) = (x-\lambda)^2$, where $\lambda\in \GF(p)$. We may assume that $A$ is in Jordan form, i.e., $A=\lambda I+ E_{12}$.
It follows that $\dim \rng{A}_1 = 2$ and
\begin{equation*}
  \rng{A}_1=\{\alpha I+ \beta E_{12} \mid \alpha, \beta \in \GF(p)\}.
\end{equation*}
Note that an element of $\rng{A}_1$ is a generator if and only if $\beta \ne 0$. So, the number of generators of $\rng{A}_1$ is $p(p-1).$
The subring $\rng{A}_1$ still does not depend on the specific value of $\lambda$, so the assumption of Proposition~\ref{prop:V_(X)} is satisfied.

Furthermore, direct calculation shows that the centralizer of $A$ equals \begin{equation}\label{eq:CentralizerTypeE}
    \C(A)=\Bigg\{ \begin{bmatrix}
    a & b & c \\ 0 & a & 0 \\ 0 & h & i
\end{bmatrix} \mid a,b,c,h,i \in \GF(p) \Bigg \}.
\end{equation}
A matrix from $\C(A)$ is invertible if and only if $a\neq 0$ and $i\neq 0$, so that
\begin{align*}
    \big|\C(A)\cap \GL_3(\GF(p)) \big| =(p-1)^2p^3.
\end{align*} 
In order to compute $\omega_A$, let $M\in \rng{A}_1 \cap \mathcal{O}(A)$. We have $M=\alpha I+ \beta E_{12}$, and since $M$ is similar to $A,$ we have $\alpha = \lambda$ and $\beta \neq 0.$ 
Therefore, $\omega_A = |\rng{A}_1 \cap \mathcal{O}(A)|=p-1$.
We conclude that the number of vertices of type $\mathrm{(E)}$ is equal to
\begin{equation*}
\begin{aligned}
    |V_{\mathrm{(E)}}| &= \frac{|\mathcal{O}(A)|}{\omega_A}= \frac{|\GL_3(\GF(p))|}{|\C(A) \cap \GL_3(\GF(p))|\cdot \omega_A} = \frac{(p^3-1)(p^3-p)(p^3-p^2)}{(p-1)^2p^3\cdot (p-1)}\\ &= (p^2+p+1)(p+1).
\end{aligned}
\end{equation*}

\medskip

{\bf Case (F)}: Assume that matrix $A$ satisfies $p_A(x) = m_A(x) = (x-\lambda)^2(x-\mu)$, where $\lambda, \mu\in \GF(p)$ and $\lambda \ne \mu$. We may assume that $A$ is in Jordan form, i.e., $A=\lambda(E_{11}+E_{22})+E_{12}+ \mu E_{33}$.
It follows that $\dim \rng{A}_1 = 3$ and
\begin{equation*}
  \rng{A}_1=\{\alpha(E_{11}+E_{22})+ \beta E_{12}+ \gamma E_{33} \mid \alpha, \beta, \gamma \in \GF(p)\}.
\end{equation*}
Note that an element of $\rng{A}_1$ is a generator if and only if $\alpha \ne \gamma$ and $\beta \ne 0$. So, the number of generators of $\rng{A}_1$ is $p(p-1)^2.$

Note also that $\rng{A}_1$ does not depend on the specific values of $\lambda$ and $\mu$, so again the assumption of Proposition~\ref{prop:V_(X)} is satisfied. Since matrix $A$ is non-derogatory, we have $\C(A)=\rng{A}_1$, hence,
$$|\C(A) \cap \GL_3(\GF(p))| = (p-1)^2p.$$

Let $M\in \rng{A}_1 \cap \mathcal{O}(A)$. Then $M=\alpha(E_{11}+E_{22})+ \beta E_{12}+ \gamma E_{33}$ and since $M$ is similar to $A$ we have $\alpha = \lambda$, $\gamma=\mu$ and $\beta \neq 0.$ 
Therefore,
\[\omega_A = |\rng{A}_1 \cap \mathcal{O}(A)|=p-1.\] 
By Proposition~\ref{prop:V_(X)} we conclude that the number of vertices of type $\mathrm{(F)}$ is equal to
\begin{equation*}
\begin{aligned}
    |V_{\mathrm{(F)}}| &= \frac{|\mathcal{O}(A)|}{\omega_A}= \frac{|\GL_3(\GF(p))|}{|\C(A) \cap \GL_3(\GF(p))|\cdot \omega_A} = \frac{(p^3-1)(p^3-p)(p^3-p^2)}{(p-1)^2p\cdot (p-1)}\\ &= (p^2+p+1)p^2(p+1).
\end{aligned}
\end{equation*}

\medskip

{\bf Case (G)}: Assume that matrix $A$ is such that $p_A(x) = m_A(x)$ is irreducible over $\GF(p)$. It follows that $\dim \rng{A}_1 = 3$. Since $m_A$ is irreducible, $\rng{A}_1 \cong \GF(p^3)$ (see for example \cite{He99}), so the number of generators of $\rng{A}_1$ is $p^3-p$.
Note that matrix $A$ is non-derogatory, so $\C(A)=\rng{A}_1$ and
$$|\C(A) \cap \GL_3(\GF(p))| = p^3-1.$$

We continue in a slightly different way than in the previous cases. From Proposition~\ref{prop:NumberofmatricesSIMILARtoGIVENmatrix} we know that \[\displaystyle |\mathcal{O}(A)|=\frac{\big|\GL_3(\GF(p))\big|}{\big| \C(A) \cap \GL_3(\GF(p)) \big|} = \frac{(p^3-1)(p^3-p)(p^3-p^2)}{(p^3-1)} = (p^3-p) (p^3-p^2).\]
Since $A$ is similar to the companion matrix of polynomial $p_A$, the orbit of every matrix of type $\mathrm{(G)}$ contains a companion matrix of an irreducible polynomial. In fact, it contains exactly one companion matrix,
because if $C_1$ and $C_2$ are companion matrices of two irreducible polynomials of degree $3$ contained in the same orbit, then $C_1$ is similar to $C_2$ which implies that $p_{C_1} = p_{C_2}$ and hence $C_1=C_2.$
So, the number of orbits in this case is the same as the number of monic irreducible polynomials of degree 3, which is equal to $\frac{p^3-p}{3}$ by \cite[Theorem 3.25]{LiNi94}.
This means that the number of all matrices of type $\mathrm{(G)}$ is equal to \begin{equation*}
    \frac{p^3-p}{3}\cdot |\mathcal{O}(A)| = \frac{p^3-p}{3} \cdot (p^3-p) (p^3-p^2) = \frac{(p^3-p)(p^3-p) (p^3-p^2)}{3}.
\end{equation*}
If we divide this number by the number of generators of a vertex of type $\mathrm{(G)}$ we get the number of vertices of type $\mathrm{(G)}$
\begin{equation*}
\begin{aligned}
    |V_{\mathrm{(G)}}| = \frac{(p^3-p)(p^3-p) (p^3-p^2)}{3(p^3-p)}= \tfrac13(p^3-p)(p^3-p^2).
\end{aligned}
\end{equation*}

\medskip

{\bf Case (H)}: Assume that matrix $A$ satisfies $p_A(x) = m_A(x) = (x-\lambda)p_1(x)$, where $\lambda\in \GF(p)$ and $p_1$ is irreducible over $\GF(p)$. It follows that $\dim \rng{A}_1 = 3$. 
Matrix $A$ is similar to block diagonal matrix
\begin{equation}\label{eq:CompanionMatrixTypeH}
\begin{bmatrix} 
  \lambda & 0  \\
  0 & C
\end{bmatrix},
\end{equation}
where $C$ is the companion matrix of $p_1$. Since $p_1$ is irreducible, $\rng{C}_1 \cong \GF(p^2)$, so
$$\rng{A}_1 \cong \GF(p) \oplus \GF(p^2).$$
Consequently, the number of generators of $\rng{A}_1$ is $p(p^2-p)$. Note that matrix $A$ is non-derogatory, so $\C(A)=\rng{A}_1$. Furthermore, $B =(b_1,b_2) \in \GF(p) \oplus \GF(p^2)$ is invertible if and only if $b_1 \ne 0$ and $b_2 \ne 0$, so
$$|\C(A) \cap \GL_3(\GF(p))| = (p-1)(p^2-1).$$
From Proposition~\ref{prop:NumberofmatricesSIMILARtoGIVENmatrix} we conclude that \[\displaystyle |\mathcal{O}(A)|=\frac{\big|\GL_3(\GF(p))\big|}{\big| \C(A) \cap \GL_3(\GF(p)) \big|} = \frac{(p^3-1)(p^3-p)(p^3-p^2)}{(p^2-1)(p-1)} = (p^3-1)p^3.\] 

By similar arguments as in case $\mathrm{(G)}$, the orbit of every matrix of type $\mathrm{(H)}$ contains exactly one matrix of the form \eqref{eq:CompanionMatrixTypeH}. So, the number of orbits in this case is equal to the number of polynomials of the form $(x-\lambda)p_1(x)$, where $\lambda\in \GF(p)$ and $p_1$ is irreducible over $\GF(p)$. This number is equal to $p\cdot \tfrac{p^2-p}{2}$ by \cite[Theorem 3.25]{LiNi94}.
This implies that the number of matrices of type $\mathrm{(H)}$ equals
\begin{equation*}
    p\cdot \frac{p^2-p}{2} \cdot |\mathcal{O}(A)| = p\cdot \frac{p^2-p}{2} \cdot (p^3-1)p^3 = \frac{p^5(p-1)(p^3-1)}{2}.
\end{equation*}
Dividing by the number of generators of $\rng{A}_1$, we obtain
\begin{equation*}
    |V_{\mathrm{(H)}}| = \frac{p^5(p-1)(p^3-1)}{2p^2(p-1)} = \tfrac12 p^3(p^3-1).
\end{equation*}

As we have seen above, case by case, if two vertices are of the same type, then they have the same number of single generators. This allows us to introduce the following notation.

\begin{definition}
We denote by $\gen_{\mathrm{(X)}}$ the number of single generators of a vertex of type $\mathrm{(X)}$.
\end{definition}

We now summarize the results from cases $\mathrm{(A)}$--$\mathrm{(H)}$ in next proposition.

\begin{proposition}
    Let $p$ be a prime and $\mm_3(\GF(p))$ the ring of $3 \times 3$ matrices over $\GF(p)$. For every $\mathrm{X} \in \{\mathrm{A}, \mathrm{B}, \ldots, \mathrm{H}\}$, Table~\ref{tab:RedTable} gives the number of vertices of type $(\mathrm{X})$ in the unital compressed commuting graph $\Lambda^1(\mm_3(\GF(p)))$, the number of single generators of each vertex of type $(\mathrm{X})$, and its dimension over $\GF(p)$.
\end{proposition}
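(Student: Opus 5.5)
The proposition is a summary, so the plan is to assemble Table~\ref{tab:RedTable} from the case analysis $\mathrm{(A)}$--$\mathrm{(H)}$ above. By Proposition~\ref{prop:MAT2VERTICES3x3}, every vertex of $\Lambda^1(\mm_3(\GF(p)))$ has a well-defined type, so the sets $V_{\mathrm{(X)}}$ partition the vertex set. Similarity preserves both $\dim\rng{A}_1$ and the number of single generators, and every type is a union of similarity classes. Hence $\gen_{\mathrm{(X)}}$ and the dimension are well defined per type, and each row of the table can be read from a normal-form representative $A$ (Jordan form, or the block form \eqref{eq:CompanionMatrixTypeH}).

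For each type I would list three quantities.
\begin{enumerate}
\item[] \emph{Dimension:} the degree of $m_A$, which gives $1$ for $\mathrm{(A)}$, $2$ for $\mathrm{(B)}$ and $\mathrm{(E)}$, and $3$ for all other types.
\item[] \emph{Generators:} $\gen_{\mathrm{(A)}}=p$, $\gen_{\mathrm{(B)}}=p(p-1)$, $\gen_{\mathrm{(C)}}=p(p-1)(p-2)$, $\gen_{\mathrm{(D)}}=p^2(p-1)$, $\gen_{\mathrm{(E)}}=p(p-1)$, $\gen_{\mathrm{(F)}}=p(p-1)^2$, $\gen_{\mathrm{(G)}}=p^3-p$ and $\gen_{\mathrm{(H)}}=p(p^2-p)$.
\item[] \emph{Vertex counts:} the values of $|V_{\mathrm{(X)}}|$ computed in each case.
\end{enumerate}

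For types $\mathrm{(B)}$--$\mathrm{(F)}$ the vertex count comes from Proposition~\ref{prop:V_(X)}. Its hypothesis holds because $\rng{A}_1$ depends only on the block pattern of $A$, not on the eigenvalues. For $\mathrm{(G)}$ and $\mathrm{(H)}$ the count is obtained instead by counting orbits through irreducible polynomials and dividing the total number of matrices of that type by the number of generators.

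As a check that no vertex is missed or counted twice, I would verify the identity
\[\sum_{\mathrm{X}} |V_{\mathrm{(X)}}|\cdot \gen_{\mathrm{(X)}} = p^9.\]
This holds because each matrix lies in exactly one class $[A]_1$, and the types exhaust all splitting patterns of $p_A$ and $m_A$ for $3\times 3$ matrices. That the types are exhaustive follows from the $(d,e)$ analysis in Proposition~\ref{prop:MAT2VERTICES3x3}.

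The summands are the following:
\begin{align*}
&\mathrm{(A)}\colon\ p, \qquad \mathrm{(B)}\colon\ (p^2+p+1)p^3(p-1), \qquad \mathrm{(C)}\colon\ \tfrac16(p^2+p+1)p^4(p+1)(p-1)(p-2),\\
&\mathrm{(D)}\colon\ (p^3-1)(p+1)p^2(p-1), \qquad \mathrm{(E)}\colon\ (p^2+p+1)(p+1)p(p-1),\\
&\mathrm{(F)}\colon\ (p^2+p+1)p^3(p+1)(p-1)^2, \qquad \mathrm{(G)}\colon\ \tfrac13(p^3-p)^2(p^3-p^2), \qquad \mathrm{(H)}\colon\ \tfrac12p^5(p-1)(p^3-1).
\end{align*}

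The main obstacle is not conceptual. It is making sure the polynomial identity holds, which is a routine but error-prone expansion. I would also remark that for $p=2$ type $\mathrm{(C)}$ is empty, which is consistent with its formula vanishing through the factor $p-2$.
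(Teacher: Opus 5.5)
Your proposal is correct and follows the paper's approach. The table is assembled from the case analysis $\mathrm{(A)}$--$\mathrm{(H)}$, using Proposition~\ref{prop:V_(X)} for types $\mathrm{(B)}$--$\mathrm{(F)}$ and the count of irreducible polynomials for $\mathrm{(G)}$ and $\mathrm{(H)}$. The identity $\sum_{\mathrm{X}}|V_{\mathrm{(X)}}|\gen_{\mathrm{(X)}}=p^9$ is, as in the paper's remark, only a consistency check; it does hold, e.g.\ $512=2^9$ and $19683=3^9$.

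One small point of precision. Similarity-invariance alone does not make $\gen_{\mathrm{(X)}}$ constant on a type, because a type contains many similarity classes. What makes it constant is your other observation: the normal-form subring is independent of the eigenvalues, and for $\mathrm{(G)}$ and $\mathrm{(H)}$ it is always $\GF(p^3)$, respectively $\GF(p)\oplus\GF(p^2)$.
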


Note that the scalar product of the columns $|V_{\mathrm{(X)}}|$ and $\gen_{\mathrm{(X)}}$ of Table~\ref{tab:RedTable} equals $p^9$, the cardinality of $\mm_3(\GF(p)).$

\begin{table}[h!]
    \caption{Vertices of $\Lambda^1(\mm_3(\GF(p))).$}
    \centering
    \begin{tabular}{c|ccc}
     $(\mathrm{X})$ & $|V_{\mathrm{(X)}}|$ & $\gen_{\mathrm{(X)}}$ &  $\dim_{(\mathrm{X})}$ \\ \hline \hline
      $\mathrm{(A)}$ & $1$  & $p$ & $1$ \\
      $\mathrm{(B)}$ & $(p^2+p+1)p^2$ & $p(p-1)$ & $2$ \\ 
      $\mathrm{(C)}$ & $\frac{1}{6}(p^2+p+1)p^3(p+1)$ & $p(p-1)(p-2)$ & $3$ \\ 
      $\mathrm{(D)}$ & $(p^3-1)(p+1)$ & $p^2(p-1)$ & $3$ \\
      $\mathrm{(E)}$ & $(p^2+p+1)(p+1)$ & $p(p-1)$ & $2$ \\
      $\mathrm{(F)}$ & $(p^2+p+1)p^2(p+1)$ & $p(p-1)^2$ & $3$ \\
      $\mathrm{(G)}$ & $\frac{1}{3}(p^3-p)(p^3-p^2)$ & $p^3-p$ & $3$ \\
      $\mathrm{(H)}$ & $\frac{1}{2}(p^3-1)p^3$ & $p^2(p-1)$ & $3$ \\
    \end{tabular} \vspace{4mm}
    \label{tab:RedTable}
\end{table}  

\begin{remark}
    In cases $\mathrm{(B)},\mathrm{(C)},\mathrm{(D)},\mathrm{(E)},$ and $\mathrm{(F)}$, we have shown that the orbit $\mathcal{O}(A)$ of a generator of a vertex intersects every vertex of the same type.
    This is obviously also true in case $\mathrm{(A)}$ and it can be shown that it holds also in cases $\mathrm{(G)}$ and $\mathrm{(H)}$.
    So we can omit the assumption about the orbit in Proposition~\ref{prop:V_(X)}.
\end{remark}

\section{Neighborhoods of vertices of \texorpdfstring{$\Lambda^1(\mm_3(\GF(p)))$}{Λ¹(ℳ₃(\GF(p)))}}\label{sec:Neighborhoods}
\rm

In this section we describe the neighborhood of each vertex of $\Lambda^1(\mm_3(\GF(p))).$ For a vertex $v$ of a certain type we will calculate the number of vertices of each type that are connected to $v$. Note that the neighborhood of vertex $v=\rng{A}_1$ is a unital compressed commuting graph of the centralizer $\C(A)$. For a vertex $v$ of type $\mathrm{(Y)}$ we will denote by $N(\mathrm{X},\mathrm{Y})$ the number of neighbors of $v$ of type $\mathrm{(X)}$.
We consider the cases with $\dim \rng{A}_1$ equal to $1$ and $3$ first.

\medskip{\bf Case (A)}: A matrix of type $\mathrm{(A)}$ is a scalar matrix so its centralizer is the whole $\mm_3(\GF(p))$. It follows that $N(\mathrm{X},\mathrm{A})=|V_{\mathrm{(X)}}|$ for any type $\mathrm{(X)}$.

{\bf Case (C)}: 
A matrix $A$ of type $\mathrm{(C)}$ is diagonalizable, so we may assume it is diagonal. Then, the centralizer of $A$ equals
$$\C(A)=\rng{A}_1=\{\alpha E_{11}+\beta E_{22}+\gamma E_{33} \mid \alpha, \beta, \gamma \in \GF(p)\}.$$
Let $B=\alpha E_{11}+\beta E_{22}+\gamma E_{33} \in \C(A)$. If $\alpha=\beta=\gamma$ then $B$ is of type $\mathrm{(A)}$. If $\beta = \gamma \neq \alpha$, $\alpha=\beta\neq \gamma$ or $\alpha=\gamma\neq \beta$ then $B$ is of type $\mathrm{(B)}$ and the three cases give three different vertices. If $\alpha,\beta$ and $\gamma$ are all different then $\rng{B}_1=\rng{A}_1$.
It follows that
$$N(\mathrm{A},\mathrm{C})=1, \qquad N(\mathrm{B},\mathrm{C})=3, \qquad N(\mathrm{C},\mathrm{C})=1,$$
and
$$N(\mathrm{X},\mathrm{C})=0 \qquad \text{for all $\mathrm{X} \in \{\mathrm{D}, \mathrm{E}, \mathrm{F}, \mathrm{G}, \mathrm{H}\}$}.$$

{\bf Case (D)}: 
We may assume that a matrix $A$ of type $\mathrm{(D)}$ is in Jordan form. Recall that the centralizer of $A$ is
$$\C(A)=\rng{A}_1=\{\alpha I+ \beta(E_{12}+E_{23})+\gamma E_{13} \mid \alpha, \beta, \gamma \in \GF(p)\}.$$
Let $B=\alpha I+ \beta(E_{12}+E_{23})+\gamma E_{13} \in \C(A)$. If $\beta=\gamma=0$ then $B$ is of type $\mathrm{(A)}$. If $\beta=0$ and $\gamma \ne 0$ then $B$ is of type $\mathrm{(E)}$ and all such matrices generate the same subring. If $\beta \ne 0$ then $\rng{B}_1=\rng{A}_1$.
It follows that
$$N(\mathrm{A},\mathrm{D})=1, \qquad N(\mathrm{D},\mathrm{D})=1, \qquad N(\mathrm{E},\mathrm{D})=1,$$
and
$$N(\mathrm{X},\mathrm{D})=0 \qquad \text{for all $\mathrm{X} \in \{\mathrm{B}, \mathrm{C}, \mathrm{F}, \mathrm{G}, \mathrm{H}\}$}.$$

{\bf Case (F)}: We may assume that a matrix $A$ of type $\mathrm{(E)}$ is in Jordan form. Recall that the centralizer of $A$ is
$$\C(A)=\rng{A}_1=\{\alpha(E_{11}+E_{22})+ \beta E_{12}+ \gamma E_{33} \mid \alpha, \beta, \gamma \in \GF(p)\}.$$
Let $B=\alpha(E_{11}+E_{22})+ \beta E_{12}+ \gamma E_{33} \in \C(A)$. If $\alpha=\beta$ and $\gamma=0$ then $B$ is of type $\mathrm{(A)}$. If $\alpha=\beta$ and $\gamma \ne 0$ then matrix $B$ is of type $\mathrm{(E)}$ and all such matrices generate the same subring. If $\alpha \ne \beta$ and $\gamma=0$ then $B$ is of type $\mathrm{(B)}$ and all such matrices generate the same subring. Finally, if $\alpha \ne \beta$ and $\gamma \ne 0$ then $\rng{B}_1=\rng{A}_1$.
It follows that
$$N(\mathrm{A},\mathrm{F})=1, \qquad N(\mathrm{B},\mathrm{F})=1, \qquad N(\mathrm{E},\mathrm{F})=1, \qquad N(\mathrm{F},\mathrm{F})=1$$
and
$$N(\mathrm{X},\mathrm{F})=0 \qquad \text{for all $\mathrm{X} \in \{\mathrm{C}, \mathrm{D}, \mathrm{G}, \mathrm{H}\}$}.$$

{\bf Case (G)}: If $A$ is of type $\mathrm{(G)}$ then $\C(A)=\rng{A}_1 \cong \GF(p^3)$. This field has only two subrings generated by one element, namely $\GF(p)$, which corresponds to a vertex of type $\mathrm{(A)}$, and $\GF(p^3)$, which corresponds to $\rng{A}_1$. Hence,
$$N(\mathrm{A},\mathrm{G})=1, \qquad N(\mathrm{G},\mathrm{G})=1,$$
and
$$N(\mathrm{X},\mathrm{G})=0 \qquad \text{for all $\mathrm{X} \in \{\mathrm{B}, \mathrm{C}, \mathrm{D}, \mathrm{E}, \mathrm{F}, \mathrm{H}\}$}.$$

{\bf Case (H)}: Recall that a matrix $A$ of type $H$ can be assumed to be in a block diagonal form
$$A=\begin{bmatrix} 
  \lambda & 0  \\
  0 & C
\end{bmatrix},$$
where $\lambda \in \GF(p)$ and $\rng{C}_1 \cong \GF(p^2)$. Hence, the centralizer of $A$ equals
$$\C(A)=\left\{
\begin{bmatrix} 
  \alpha & 0  \\
  0 & \beta I+\gamma C
\end{bmatrix} \mid \alpha, \beta, \gamma \in \GF(p)
\right\}.$$
Let $B=\begin{bmatrix} 
  \alpha & 0  \\
  0 & \beta I+\gamma C
\end{bmatrix} \in \C(A)$. If $\gamma=0$ and $\alpha=\beta$ then $B$ is of type $\mathrm{(A)}$. If $\gamma=0$ and $\alpha \ne \beta$  then $B$ is of type $\mathrm{(B)}$ and all such matrices generate the same subring. Finally, if $\gamma \ne 0$ then $\rng{B}_1=\rng{A}_1$.
It follows that
$$N(\mathrm{A},\mathrm{H})=1, \qquad N(\mathrm{B},\mathrm{H})=1, \qquad N(\mathrm{H},\mathrm{H})=1,$$
and
$$N(\mathrm{X},\mathrm{H})=0 \qquad \text{for all $\mathrm{X} \in \{\mathrm{C}, \mathrm{D}, \mathrm{E}, \mathrm{F}, \mathrm{G}\}$}.$$

{\bf Case (E)}: We may assume that the matrix $A$ of type $\mathrm{(E)}$ is in its Jordan form. Recall that the centralizer of $A$ is
\begin{equation*}
\C(A)=\Bigg\{ \begin{bmatrix}
    a & b & c \\ 0 & a & 0 \\ 0 & h & i
\end{bmatrix} \mid a,b,c,h,i \in \GF(p) \Bigg \}.
\end{equation*}
Let $B \in \C(A)$.
First, since $|V_{\mathrm{(A)}}|=1$ and matrix $B$ cannot be of type $\mathrm{(C)}$, $\mathrm{(G)}$ or $\mathrm{(H)}$, we have
$$N(\mathrm{A},\mathrm{E})=1, \qquad N(\mathrm{C},\mathrm{E})=0, \qquad N(\mathrm{G},\mathrm{E})=0 \qquad \text{and} \qquad N(\mathrm{H},\mathrm{E})=0.$$
The number of all edges between $V_{\mathrm{(E)}}$ and $V_{\mathrm{(X)}}$ can be calculated in two different ways, namely
$$N(\mathrm{X},\mathrm{E}) \cdot |V_{\mathrm{(E)}}|=N(\mathrm{E},\mathrm{X}) \cdot |V_{\mathrm{(X)}}|.$$
It follows that
\begin{align*}
N(\mathrm{D},\mathrm{E}) &=\frac{N(\mathrm{E},\mathrm{D}) \cdot |V_{\mathrm{(D)}}|}{|V_{\mathrm{(E)}}|}=\frac{1 \cdot (p^3-1)(p+1)}{(p^2+p+1)(p+1)}=p-1,\\
N(\mathrm{F},\mathrm{E}) &=\frac{N(\mathrm{E},\mathrm{F}) \cdot |V_{\mathrm{(F)}}|}{|V_{\mathrm{(E)}}|}=\frac{1 \cdot (p^2+p+1)p^2(p+1)}{(p^2+p+1)(p+1)}=p^2.
\end{align*}
Recall that $\gen_{\mathrm{(X)}}$ denotes the number of generators of a vertex of type $\mathrm{(X)}$.
We now consider two cases. If $a=i$, then $B$ is of type $\mathrm{(A)}$, $\mathrm{(D)}$ or $\mathrm{\mathrm{(E)}}$. There are $p^4$ such matrices in $\C(A)$, thus
$$N(\mathrm{A},\mathrm{E}) \cdot \gen_{\mathrm{(A)}} + N(\mathrm{D},\mathrm{E}) \cdot \gen_{\mathrm{(D)}} + N(\mathrm{E},\mathrm{E}) \cdot \gen_{\mathrm{(E)}} = p^4.$$
It follows that
\begin{align*}
    N(\mathrm{E},\mathrm{E}) &=\frac{p^4- N(\mathrm{A},\mathrm{E}) \cdot \gen_{\mathrm{(A)}} - N(\mathrm{D},\mathrm{E}) \cdot \gen_{\mathrm{(D)}}}{\gen_{\mathrm{(E)}}}\\
    &=\frac{p^4- 1 \cdot p - (p-1) \cdot p^2(p-1)}{p(p-1)} = 2p+1.
\end{align*}
If $a\ne i$, then $B$ is of type $\mathrm{(B)}$ or $\mathrm{(F)}$. There are $p^4(p-1)$ such matrices in $\C(A)$, thus
$$N(\mathrm{B},\mathrm{E}) \cdot \gen_{\mathrm{(B)}} + N(\mathrm{F},\mathrm{E}) \cdot \gen_{\mathrm{(D)}} = p^4(p-1).$$
It follows that
\begin{align*}
    N(\mathrm{B},\mathrm{E}) &=\frac{p^4(p-1)- N(\mathrm{F},\mathrm{E}) \cdot \gen_{\mathrm{(F)}}}{\gen_{\mathrm{(B)}}}=\frac{p^4(p-1) - p^2 \cdot p(p-1)^2}{p(p-1)} = p^2.
\end{align*}

{\bf Case (B)}: Matrix $A$ of type $\mathrm{(B)}$ is diagonalizable, so we may assume it is in diagonal form. Recall that the centralizer of $A$ is
\begin{equation*}
    \C(A)=\left \{ \begin{bmatrix}
        \alpha & 0\\
         0 & C
    \end{bmatrix} \mid \alpha \in \GF(p), \ C \in \mm_2(\GF(p)) \right \}.
\end{equation*}
Let $B=\begin{bmatrix}
        \alpha & 0\\
         0 & C
    \end{bmatrix} \in \C(A)$.
First, since $|V_{\mathrm{(A)}}|=1$ and matrix $B$ cannot be of type $\mathrm{(D)}$ or $\mathrm{(G)}$, we have
$$N(\mathrm{A},\mathrm{B})=1, \qquad N(\mathrm{D},\mathrm{B})=0 \qquad \text{and} \qquad N(\mathrm{G},\mathrm{B})=0.$$
Similarly as in Case $\mathrm{(E)}$, we have
$$N(\mathrm{X},\mathrm{B}) \cdot |V_{\mathrm{(B)}}|=N(\mathrm{B},\mathrm{X}) \cdot |V_{\mathrm{(X)}}|.$$
It follows that
\begin{align*}
N(\mathrm{C},\mathrm{B}) &=\frac{N(\mathrm{B},\mathrm{C}) \cdot |V_{\mathrm{(C)}}|}{|V_{\mathrm{(B)}}|}=\frac{3 \cdot \frac{1}{6}(p^2+p+1)p^3(p+1)}{(p^2+p+1)p^2}=\tfrac12 p(p+1),\\
N(\mathrm{E},\mathrm{B}) &=\frac{N(\mathrm{B},\mathrm{E}) \cdot |V_{\mathrm{(E)}}|}{|V_{\mathrm{(B)}}|}=\frac{p^2 \cdot (p^2+p+1)(p+1)}{(p^2+p+1)p^2}=p+1,\\
N(\mathrm{F},\mathrm{B}) &=\frac{N(\mathrm{B},\mathrm{F}) \cdot |V_{\mathrm{(F)}}|}{|V_{\mathrm{(B)}}|}=\frac{1 \cdot (p^2+p+1)p^2(p+1)}{(p^2+p+1)p^2}=p+1,\\
N(\mathrm{H},\mathrm{B}) &=\frac{N(\mathrm{B},\mathrm{H}) \cdot |V_{\mathrm{(H)}}|}{|V_{\mathrm{(B)}}|}=\frac{1 \cdot \frac{1}{2}(p^3-1)p^3}{(p^2+p+1)p^2}=\tfrac12 p(p-1).
\end{align*}
In total there are $p^5$ matrices in $\C(A)$, so
$$\sum_{\mathrm{X} \in\{\mathrm{A}, \mathrm{B}, \mathrm{C}, \mathrm{E}, \mathrm{F}, \mathrm{H}\}} N(\mathrm{X},\mathrm{B}) \cdot \gen_{\mathrm{(X)}} = p^5.$$
It follows that
\begin{align*}
    N(\mathrm{B},\mathrm{B}) &=\frac{p^5- \sum_{\mathrm{X} \in\{\mathrm{A}, \mathrm{C}, \mathrm{E}, \mathrm{F}, \mathrm{H}\}} N(\mathrm{X},\mathrm{B}) \cdot \gen_{\mathrm{(X)}}}{\gen_{\mathrm{(B)}}}\\
    &=\tfrac{p^5- 1 \cdot p - \tfrac12 (p^2+p) \cdot p(p-1)(p-2)-(p+1) \cdot p(p-1)-(p+1) \cdot p(p-1)^2-\tfrac12 p(p-1) \cdot p^2(p-1)}{p(p-1)}\\
    &=p^2+p+1.
\end{align*}

\medskip 

The results about the neighborhoods of all the vertices of $\Lambda_1(\mm_3(\GF(p)))$ obtained above are summarized in next proposition.

\begin{proposition}
    Let $p$ be a prime and $\mm_3(\GF(p))$ the ring of $3 \times 3$ matrices over $\GF(p)$. The columns of Table~\ref{tab:AllNeighborhoods} describe the neighborhoods of vertices of each type in the unital compressed commuting graph $\Lambda^1(\mm_3(\GF(p)))$. In particular, for every $\mathrm{X},\mathrm{Y} \in \{\mathrm{A}, \mathrm{B}, \ldots, \mathrm{H}\}$, the table gives the number of neighbors of type $(\mathrm{Y})$ in the neighborhood of each vertex of type $(\mathrm{X})$.
\end{proposition}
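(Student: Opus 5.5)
This proposition collects the case-by-case computations of Section~\ref{sec:Neighborhoods}. My plan is to verify that those computations give every entry of Table~\ref{tab:AllNeighborhoods}.

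The organizing principle is that the neighborhood of $v=\rng{A}_1$ is $\Lambda^1(\C(A))$. Neighborhoods are invariant under conjugation, and by Proposition~\ref{prop:MAT2VERTICES3x3} the type of a vertex is well defined. So it suffices to fix one representative $A$ of each type in Jordan form (or in the block form \eqref{eq:CompanionMatrixTypeH} for type (H)). For each such $A$ I will sort the elements of $\C(A)$ by type and count the distinct subrings they generate.

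\textbf{Small centralizers.} First I treat the types where $\C(A)$ is small.
\begin{itemize}
\item For type (A), $\C(A)=\mm_3(\GF(p))$, so the column is just $|V_{\mathrm{(X)}}|$ from Table~\ref{tab:RedTable}.
\item For the non-derogatory types (C), (D), (F), (G), (H), we have $\C(A)=\rng{A}_1$, which is $3$-dimensional and explicitly parametrized.
\end{itemize}
In each of these cases I directly list the elements of $\C(A)$ by the vanishing or coincidence of their parameters. Each class either generates $\rng{A}_1$ itself, or the scalars, or one fixed lower-dimensional subring of type (B) or (E). For types (C) and (G) I use instead three coordinate-pair coincidences and subfield structure. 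This yields all entries in those columns, mostly $0$ and $1$, plus $N(\mathrm{B},\mathrm{C})=3$.

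\textbf{Derogatory types (B) and (E).} For these the centralizers have dimension $5$, so direct enumeration is awkward. I avoid it with two counting tools.
\begin{itemize}
\item Double counting of edges between type classes, $N(\mathrm{X},\mathrm{Y})|V_{\mathrm{(Y)}}|=N(\mathrm{Y},\mathrm{X})|V_{\mathrm{(X)}}|$, using the already known columns and the vertex counts of Table~\ref{tab:RedTable}.
\item A size count: the elements of $\C(A)$ are partitioned among the vertices they generate. Hence $\sum_{\mathrm{X}} N(\mathrm{X},\mathrm{Y})\gen_{\mathrm{(X)}}$ equals the number of elements of $\C(A)$, or of a suitable sub-piece of it.
\end{itemize}

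For (E), I first settle which types cannot occur (C, G, H) by looking at eigenvalues of the block-triangular centralizer. Double counting then gives $N(\mathrm{D},\mathrm{E})$ and $N(\mathrm{F},\mathrm{E})$. Splitting $\C(A)$ by $a=i$ versus $a\ne i$ separates the types $\{\mathrm{A},\mathrm{D},\mathrm{E}\}$ from $\{\mathrm{B},\mathrm{F}\}$, which yields $N(\mathrm{E},\mathrm{E})$ and $N(\mathrm{B},\mathrm{E})$.

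For (B), I determine $N(\mathrm{C},\mathrm{B})$, $N(\mathrm{E},\mathrm{B})$, $N(\mathrm{F},\mathrm{B})$ and $N(\mathrm{H},\mathrm{B})$ by double counting. The identity $p^5=\sum_{\mathrm{X}} N(\mathrm{X},\mathrm{B})\gen_{\mathrm{(X)}}$ then forces $N(\mathrm{B},\mathrm{B})=p^2+p+1$. This agrees with Proposition~\ref{thm:UCCGM2x2}, since $\C(A)\cong\GF(p)\oplus\mm_2(\GF(p))$.

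\textbf{Main obstacle.} The hard step is justifying the vanishing entries and the type restrictions inside the derogatory centralizers. One must check carefully that an element of $\C(A)$ for $A$ of type (E) has all eigenvalues in $\GF(p)$ and satisfies $a$ as an eigenvalue of multiplicity $\geq2$. This is what rules out types C, G and H, and it makes the split by $a=i$ clean. For type (B), the block $\GF(p)\oplus\mm_2$ structure rules out types D and G.

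The final consistency check is the algebra. In each column the weighted sum $\sum_{\mathrm{X}} N(\mathrm{X},\mathrm{Y})\gen_{\mathrm{(X)}}$ should equal $|\C(A)|$, and the entries should be symmetric under double counting.
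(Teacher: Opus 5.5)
Your proposal is correct and follows the paper's proof essentially step for step. You enumerate the three-dimensional centralizers directly for the non-derogatory types, and for types $\mathrm{(E)}$ and then $\mathrm{(B)}$ you combine double counting $N(\mathrm{X},\mathrm{Y})|V_{\mathrm{(Y)}}|=N(\mathrm{Y},\mathrm{X})|V_{\mathrm{(X)}}|$ with the fact that the centralizer is the disjoint union of the generator sets of the neighbouring vertices (split by $a=i$ versus $a\neq i$ for $\mathrm{(E)}$, total $p^5$ for $\mathrm{(B)}$), and your observation that elements of the type $\mathrm{(E)}$ centralizer have characteristic polynomial $(x-a)^2(x-i)$ supplies the justification for excluding $\mathrm{(C)}$, $\mathrm{(G)}$, $\mathrm{(H)}$ that the paper only asserts.
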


\begin{table}[h!]
    \caption{Neighborhoods of vertices of $\Lambda_1(\mm_3(\GF(p)))$.}
    \centering
    \begin{tabular}{cc|cccccccc}
             &$\mathrm{(X)}$&
             $\mathrm{(A)}$ & $\mathrm{(B)}$ & $\mathrm{(C)}$ & $\mathrm{(D)}$ & $\mathrm{(E)}$ & $\mathrm{(F)}$ & $\mathrm{(G)}$ & $\mathrm{(H)}$\\
             $\mathrm{(Y)}$&&&&&&&&&\\  \hline 
         $\mathrm{(A)}$ & & 1 & 1 & 1 & 1 & $1$ & 1 & 1 & 1\\
         $\mathrm{(B)}$ & & $(p^2+p+1)p^2$ & $p^2+p+1$ & 3 & 0 & $p^2$ & 1 & 0 & 1\\
         $\mathrm{(C)}$ & & $\frac{1}{6}(p^2+p+1)p^3(p+1)$ & $\frac{1}{2}(p^2+p)$ & 1 & 0 & $0$ & 0 & 0 & 0\\
         $\mathrm{(D)}$ & & $(p^3-1)(p+1)$ & 0 & 0 & 1 & $p-1$ & 0 & 0 & 0\\
         $\mathrm{(E)}$ & & $(p^2+p+1)(p+1)$ & $p+1$ & 0 & 1 & $2p+1$ & 1 & 0 & 0\\
         $\mathrm{(F)}$ & & $(p^2+p+1)p^2(p+1)$ & $p+1$ & 0 & 0 & $p^2$ & 1 & 0 & 0\\
         $\mathrm{(G)}$ & & $\frac{1}{3}(p^3-p)(p^3-p^2)$ & 0 & 0 & 0 & $0$ & 0 & 1 & 0\\
         $\mathrm{(H)}$ & & $\frac{1}{2}(p^3-1)p^3$ & $\frac{1}{2}p(p-1)$ & 0 & 0 & $0$ & 0 & 0 & 1
    \end{tabular}
    \vspace{4mm}
   \label{tab:AllNeighborhoods}
\end{table}

\section{Subgraph induced on 
\texorpdfstring{$V_{\mathrm{(B)}} \cup V_{\mathrm{(E)}}$}{V(B) U V(E)}
}\label{sec:B-E_graph}

In this section we describe the subgraph of $\Lambda^1(\mm_3(\GF(p)))$ induced on the set $V_{\mathrm{(B)}} \cup V_{\mathrm{(E)}}.$ We will refer to this subgraph as $\mathrm{(B)}$--$\mathrm{(E)}$ graph. In order to describe $\mathrm{(B)}$--$\mathrm{(E)}$ graph, we are going to establish a bijective correspondence between the vertices $V_{\mathrm{(B)}} \cup V_{\mathrm{(E)}}$ and the point-line pairs in the projective plane over $\GF(p)$.

\subsection{Projective plane over 
\texorpdfstring{$\GF(p)$}{GF(p)} and its incidence matrix}

    Let us recall the notion of the projective plane over the field $\GF(p),$ which we denote by $\PG(2,p),$ see \cite{Ba97}. Consider the set of all 1-dimensional subspaces of the vector space $\GF(p)^3$ and denote it by $\P$ and the set of all 2-dimensional subspaces of the vector space $\GF(p)^3$ and denote it by $\L$. The points of the projective plane $\PG(2,p)$ are the elements of $\P$ while the lines are the elements of $\L$. Furthermore, a point $P\in \P$ lies on a line $L\in \L$ if and only if $P\subset L$ and we denote this by $P\in L.$
    The cardinalities of these sets are 
    \begin{equation*}
        |\P| = |\L|= p^2+p+1,
    \end{equation*}
    every line contains $p+1$ points and every point lies on $p+1$ lines. 
    The projective plane $\PG(2,p)$ can be described by its incidence matrix which is 0-1 matrix of order $(p^2+p+1)\times (p^2+p+1).$ Each row of the incidence matrix corresponds to a point and each column of the matrix corresponds to a line, where $(P,L)$ entry of the matrix is $1$ if $P\in L$, otherwise it is $0$. Each row and each column of the incidence matrix contains exactly $p+1$ ones.

    The explicit description of the incidence matrix is given in \cite{Ba08}, here we recall and slightly adapt it. Let $e$ be the vector of length $p$ with $1$ at all positions, i.e., \[ e=\begin{bmatrix}
        1 & 1 & \cdots & 1    \end{bmatrix}^{T} \in \mathbb{R}^p.\] 
        For every $s\in \{1,2,\ldots,p\}$ let $R_s \in \mm_p(\mathbb{R})$ be the matrix with $s$-th row equal to $e^T$ and all other entries $0$, i.e., \[{(R_s)}_{i,j}=\begin{cases}
            1, & \text{ if } i=s,\\
            0, & \text{ otherwise. }
        \end{cases} \]
        Furthermore, for every $s\in \{2,\ldots,p\}$ and every $t\in \{1,2,\ldots,p\}$ let $S_{s,t} \in \mm_p(\mathbb{R})$ be the permutation matrix defined as \[{(S_{s,t})}_{i,j}=\begin{cases}
            1, & \text{ if } (s-1)(i+j) \equiv t\ (\text{mod } p),\\
            0, & \text{ otherwise. }
        \end{cases} \]
        Now, let $T_p\in \mm_{1+p+p^2}(\mathbb{R})$ be 0-1 matrix defined as a block matrix 
        \begin{equation}\label{eq:T_p}
        T_p = \begin{bmatrix}
        1 & e^T & 0 & 0 & \cdots & 0 \\
        e & 0 & R_1 & R_2 & \cdots & R_p \\
        0 & R_1^T & I_p & I_p & \cdots & I_p \\
        0 & R_2^T & S_{2,1} & S_{2,2} & \cdots & S_{2,p} \\
        \vdots & \vdots & \vdots & \vdots & \ddots & \vdots \\
        0 & R_p^T & S_{p,1} & S_{p,2} & \cdots & S_{p,p}    \end{bmatrix},  \end{equation} 
        where $I_p$ is the identity matrix of order $p$. It is proved in the \cite{Ba08} that $T_p$ is the incidence matrix of the plane $\PG(2,p).$ 
    
For example, when $p=3$, the incidence matrix $T_3$ of the projective plane $\PG(2,3)$ is
    \[T_3 = \left[ \begin{array}{ c | c c c | c c c | c c c | c c c }
                                1 & 1 & 1 & 1 & 0 & 0 & 0 & 0 & 0 & 0 & 0 & 0 & 0\\ \hline
                                1 & 0 & 0 & 0 & 1 & 1 & 1 & 0 & 0 & 0 & 0 & 0 & 0\\
                                1 & 0 & 0 & 0 & 0 & 0 & 0 & 1 & 1 & 1 & 0 & 0 & 0\\
                                1 & 0 & 0 & 0 & 0 & 0 & 0 & 0 & 0 & 0 & 1 & 1 & 1\\ \hline
                                0 & 1 & 0 & 0 & 1 & 0 & 0 & 1 & 0 & 0 & 1 & 0 & 0\\
                                0 & 1 & 0 & 0 & 0 & 1 & 0 & 0 & 1 & 0 & 0 & 1 & 0\\
                                0 & 1 & 0 & 0 & 0 & 0 & 1 & 0 & 0 & 1 & 0 & 0 & 1\\ \hline
                                0 & 0 & 1 & 0 & 0 & 0 & 1 & 1 & 0 & 0 & 0 & 1 & 0\\
                                0 & 0 & 1 & 0 & 0 & 1 & 0 & 0 & 0 & 1 & 1 & 0 & 0\\
                                0 & 0 & 1 & 0 & 1 & 0 & 0 & 0 & 1 & 0 & 0 & 0 & 1\\\hline
                                0 & 0 & 0 & 1 & 1 & 0 & 0 & 0 & 0 & 1 & 0 & 1 & 0\\
                                0 & 0 & 0 & 1 & 0 & 0 & 1 & 0 & 1 & 0 & 1 & 0 & 0\\
                                0 & 0 & 0 & 1 & 0 & 1 & 0 & 1 & 0 & 0 & 0 & 0 & 1
                   \end{array}\right]. \]

\subsection{Bijection between 
\texorpdfstring{$V_{\mathrm{(B)}} \cup V_{\mathrm{(E)}}$}{V(B) U V(E)} and
\texorpdfstring{$\P \times \L$}{P × L}
}\label{sec:Bijection}

We denote \begin{equation}
        \B = \{ (P,L)\in \P\times \L \mid P\notin L \}
    \end{equation} 
    and 
    \begin{equation}
        \E = \{ (P,L)\in \P\times \L \mid P\in L \}.
    \end{equation} 
    Elements of $\B$ correspond to zeros in the incidence matrix $T_p$ and elements of $\E$ correspond to ones in $T_p$. Obviously, $|\E| = (p+1)(p^2+p+1)$  and $|\B| = p^2(p^2+p+1).$
    
    We first establish a bijection between the sets $V_{\mathrm{(B)}}$ and $\B$. Let $v\in V_{\mathrm{(B)}}$ be an arbitrary vertex. Let $A$ be a generator of vertex $v$, with characteristic polynomial $p_A(x)=(x-\lambda)(x-\mu)^2$, where $\lambda \ne \mu$.
    Denote $\widehat{A}=(\lambda-\mu)^{-1}(A-\mu I)$. Then $\widehat{A}$ is a rank $1$ idempotent such that $v=\rng{A}_1=\rng{\widehat{A}}_1$. Since $\omega_{\widehat{A}}=1$, $\widehat{A}$ is the only rank $1$ idempotent in $v$, so $\widehat{A}$ depends only on $v$ and not on the choice of the generator $A$.
    This means that we can define a mapping $\Phi_{\mathrm{B}} \colon V_{\mathrm{(B)}} \to \B$ by
    \begin{equation*}
        \Phi_{\mathrm{B}}(v)=(\Img \widehat{A}, \Ker \widehat{A}).
    \end{equation*}   
    Note that $L=\Ker \widehat{A} \in \L$, $P=\Img \widehat{A} \in \P$, and $P \notin L$, so that $\Phi_{\mathrm{B}}(v)$ belongs to $\B$.

    In order to show that the mapping $\Phi_{\mathrm{B}}$ is a bijection, we define a mapping $\Psi_{\mathrm{B}}: \B \to V_{\mathrm{(B)}}$ as follows. For a pair $(P,L)\in \B$ we let $\Psi_{\mathrm{B}}(P,L)$ be the subring generated by the unique idempotent in $\mm_3(\GF(p))$ with image $P$ and kernel $L$. Clearly, $\Phi_{\mathrm{B}} \circ \Psi_{\mathrm{B}} = \Id_{\B}$. Since $|V_{\mathrm{(B)}}|=|\B| = p^2(p^2+p+1),$ we conclude that $\Phi_{\mathrm{B}}$ is a bijection.

   Next, we establish a bijection between the sets $V_{\mathrm{(E)}}$ and $\E$. Let $v\in V_{\mathrm{(E)}}$ be an arbitrary vertex. Let $A$ be a generator of vertex $v$ with characteristic polynomial $p_A(x)=(x-\lambda)^3$.
    Denote $\widehat{A}=A-\lambda I$. Then $\widehat{A}$ is a rank $1$ nilpotent such that $v=\rng{A}_1=\rng{\widehat{A}}_1$. Since $\omega_{\widehat{A}}=p-1$, every rank $1$ nilpotent in $v$ is a scalar multiple of $\widehat{A}$, so the kernel and image of $\widehat{A}$ depend only on $v$ and not on the choice of the generator $A$.
   This means that we can define a mapping $\Phi_{\mathrm{E}} \colon V_{\mathrm{(E)}} \to \E$ by
    \begin{equation*}
        \Phi_{\mathrm{E}}(v)=(\Img \widehat{A}, \Ker \widehat{A}).
    \end{equation*}   
    Note that $L=\Ker \widehat{A} \in \L$, $P=\Img \widehat{A} \in \P$, and $P \in L$, so that $\Phi_{\mathrm{E}}(v)$ belongs to $\E$.

    In order to show that the mapping $\Phi_{\mathrm{E}}$ is a bijection, we define a mapping $\Psi_{\mathrm{E}}: \E \to V_{\mathrm{(E)}}$ as follows. For a pair $(P,L)\in \E$ we let $\Psi_{\mathrm{E}}(P,L)$ be the subring generated by a nilpotent in $\mm_3(\GF(p))$ with image $P$ and kernel $L$.
    Since any two nilpotents with this property are scalar multiples of each other, the mapping $\Psi_{\mathrm{E}}$ is well defined. Clearly, $\Phi_{\mathrm{E}} \circ \Psi_{\mathrm{E}} = \Id_{\E}$. Since $|V_{\mathrm{(E)}}|=|\E| = (p^2+p+1)(p+1),$ we conclude that $\Phi_{\mathrm{E}}$ is a bijection.
    
     We now combine the mappings $\Phi_{B}$ and $\Phi_{\mathrm{E}}$ into a bijection $$\Phi: V_{\mathrm{(B)}} \cup V_{\mathrm{(E)}} \to \B \cup \E = \P \times \L$$ defined by $\Phi|_{V_{\mathrm{(B)}}} = \Phi_{\mathrm{B}}$ and  $\Phi|_{V_{\mathrm{(E)}}} = \Phi_{\mathrm{E}}.$ Since $V_{\mathrm{(B)}}$ and $V_{\mathrm{(E)}}$ are disjoint, $\Phi$ is well defined. Since also $\B$ and $\E$ are disjoint and $\Phi_{\mathrm{B}}$ and $\Phi_{\mathrm{E}}$ are bijections, $\Phi$ is a bijection. 

    \subsection{Geometrical interpretation of edges}

    We define a graph $\Delta$ with vertex set $V(\Delta) = \P \times \L$ and edges defined as follows. Let $v_1$ and $v_2$ be elements of $V_{\mathrm{(B)}} \cup V_{\mathrm{(E)}}.$ There is an edge between $\Phi(v_1)$ and $\Phi(v_2)$ in $\Delta$ if and only if there is an edge between $v_1$ and $v_2$ in the unital compressed commuting graph of the ring $\mm_3(\GF(p)).$   
This makes the mapping $\Phi$ a graph isomorphism between the induced subgraph of $\Lambda^1(\mm_3(\GF(p)))$ on the set $V_{\mathrm{(B)}} \cup V_{\mathrm{(E)}}$ and $\Delta.$
We will identify the vertices of the graph $\Delta$ with entries of the incidence matrix $T_p$.
Next theorem describes the edges of the graph $\Delta$ in geometric terms.

\begin{theorem}\label{thm:E(Delta)}
    Let $(P_1,L_1), (P_2,L_2) \in \P \times \L$ be arbitrary. There is an edge between $(P_1,L_1)$ and $(P_2,L_2)$ in $\Delta$ if and only if one of the following conditions holds \begin{enumerate}[$(a)$]
        \item $P_1=P_2$ and $L_1=L_2,$ 
        \item $P_1 \in L_1,$ $P_2\in L_2$, and either $P_1 = P_2$ or $L_1 = L_2,$  
        \item $P_1\neq P_2$, $L_1\neq L_2$ and $P_2\in L_1\cap L_2$ and $P_1\in L_2\setminus L_1,$ 
        \item $P_1\neq P_2$, $L_1\neq L_2$ and $P_1\in L_1\cap L_2$ and $P_2\in L_1\setminus L_2,$ 
        \item $P_1\neq P_2$, $L_1\neq L_2$ and $P_1\in L_2\setminus L_1$ and $P_2\in L_1\setminus L_2.$  
    \end{enumerate}
\end{theorem}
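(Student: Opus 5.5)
Each vertex of $V_{\mathrm{(B)}}\cup V_{\mathrm{(E)}}$ is generated by a single rank one matrix $M=uw^T$, where $u$ spans $P$ and the kernel of $M$ is the line $L=\{x : w^Tx=0\}$. For $(P,L)\in\B$ we take $M$ to be the idempotent, normalized so that $w^Tu=1$. For $(P,L)\in\E$ we take $M$ to be the nilpotent, so $w^Tu=0$. By the construction of $\Phi$ in Section~\ref{sec:Bijection}, two vertices are adjacent exactly when these chosen rank one representatives commute. The proof therefore reduces to a linear algebra criterion for when $M_1=u_1w_1^T$ and $M_2=u_2w_2^T$ commute, which we then translate into incidence language:
\[
M_1M_2=(w_1^Tu_2)\,u_1w_2^T,\qquad M_2M_1=(w_2^Tu_1)\,u_2w_1^T.
\]
Note that $w_i^Tu_j=0$ if and only if $P_j\in L_i$.

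The key step is to decide when these two matrices are equal. We split into two cases.

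\emph{Both products vanish.} This happens exactly when $P_2\in L_1$ and $P_1\in L_2$.

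\emph{Neither product vanishes.} Then both scalars are nonzero, and $u_1w_2^T$ and $u_2w_1^T$ must be proportional rank one matrices. This forces $u_1\parallel u_2$ and $w_1\parallel w_2$, i.e.\ $P_1=P_2$ and $L_1=L_2$. In that case $M_1$ and $M_2$ define the same vertex, giving the loop of condition (a).

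Exactly one product vanishing is impossible, since the two matrices would then not be equal. So commutation holds if and only if either $(P_1,L_1)=(P_2,L_2)$, or $P_2\in L_1$ and $P_1\in L_2$.

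It remains to match this criterion with the case list (a)--(e). Assume $(P_1,L_1)\neq(P_2,L_2)$, $P_2\in L_1$ and $P_1\in L_2$, and split according to whether $P_1=P_2$ and whether $L_1=L_2$.

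If $P_1=P_2$ (so $L_1\ne L_2$), the conditions say $P_1\in L_1$ and $P_2\in L_2$; this is (b).

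If $L_1=L_2$ (so $P_1\ne P_2$), they again say $P_1\in L_1$ and $P_2\in L_2$; this is also (b).

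Conversely, (b) implies the criterion. For example, if $P_1=P_2$ then $P_2=P_1\in L_1$ and $P_1=P_2\in L_2$; the case $L_1=L_2$ is symmetric.

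If $P_1\ne P_2$ and $L_1\ne L_2$, we split by whether $P_1\in L_1$ and whether $P_2\in L_2$:
\begin{itemize}
\item[] $P_1\in L_1$, $P_2\notin L_2$: this is (d).
\item[] $P_1\notin L_1$, $P_2\in L_2$: this is (c).
\item[] $P_1\notin L_1$, $P_2\notin L_2$: this is (e).
\item[] $P_1\in L_1$, $P_2\in L_2$: then $P_1,P_2\in L_1\cap L_2$, and since distinct lines meet in exactly one point, $P_1=P_2$, a contradiction. So this subcase is empty.
\end{itemize}
Conditions (c)--(e) each clearly imply the criterion.

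I expect the main obstacle to be the first step: justifying carefully that adjacency can be tested on the chosen rank one representatives. This holds because adjacency in $\Lambda^1$ depends only on the generated subrings, and the idempotent or nilpotent $\widehat A$ generates the vertex. One must also check that the normalization of $u_i$ and $w_i$ does not affect whether the matrices commute, and handle the proportionality argument cleanly when $p=2$. The rest is routine projective-plane bookkeeping.
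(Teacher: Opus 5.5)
Your proposal is correct and follows essentially the same route as the paper. Both arguments reduce adjacency to commutation of the rank-one idempotent/nilpotent representatives and show that distinct commuting vertices force $A_1A_2=A_2A_1=0$, i.e.\ $P_2\in L_1$ and $P_1\in L_2$. The paper reaches this by comparing images and kernels of a nonzero product, while you use the outer-product formula; both then finish with the same four-way case split. The proportionality step you flagged works over any field, including $p=2$.
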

\begin{proof}
Let $(P_1,L_1)=\Phi(v_1)$ and $(P_2,L_2)=\Phi(v_2)$. Let $A_1$ be the generator of $v_1$ with image $P_1$ and kernel $L_1$, and $A_2$ be the generator of $v_2$ with image $P_2$ and kernel $L_2$.

\noindent $(\Leftarrow)$: Suppose that one of the conditions $(a)$--$(e)$ holds. 
    \begin{enumerate}[$(a)$]
        \item As $(P_1,L_1)=(P_2,L_2)$ and $\Phi$ is bijection then $v_1=v_2.$ Since every vertex in $\Lambda^1(\mm_3(\GF(p)))$ has a loop there is an edge between $(P_1,L_1)$ and $(P_2,L_2).$
        \item The conditions imply that $(P_1,L_1), (P_2,L_2)\in \E,$ so that $v_1,v_2\in V_{\mathrm{(E)}}.$  
        
        If $P_1=P_2$ then $P_2\in L_1$ and $P_1\in L_2,$ so $A_1A_2=0$ and $A_2A_1=0$. Combining last two equation we get $A_1A_2=A_2A_1$ which means that there is an edge between $v_1$ and $v_2$, hence also between $(P_1,L_1)$ and $(P_2,L_2).$

        If $L_1 = L_2$ then again $P_2\in L_1$ and $P_1\in L_2,$ and we obtain the same conclusion.  
        \item[$(c)$--$(e)$] Each set of the conditions imply that $P_2\in L_1$ and $P_1\in L_2,$ so there is an edge between $(P_1,L_1)$ and $(P_2,L_2),$ as shown above. 
    \end{enumerate}
    
\noindent$(\Rightarrow)$: Suppose that there is an edge between $(P_1,L_1)$ and $(P_2,L_2)$ in $\Delta.$ Then there is an edge between $v_1$ and $v_2$ so we have $A_1A_2=A_2A_1.$ If $(P_1,L_1)=(P_2,L_2)$ then condition $(a)$ holds. So suppose that $(P_1,L_1)\neq(P_2,L_2)$. Note that matrices $A_1$ and $A_2$ are of rank 1, as their images are $P_1$ and $P_2$, which are vector subspaces of dimension 1. 

We claim that $A_1A_2=0.$ Suppose this is not the case. Then $\rank (A_1A_2) = 1.$ This implies that $\Img(A_1A_2)=\Img(A_1)=P_1$ and $\Ker (A_1A_2) = \Ker(A_2) = L_2.$ As $A_1A_2=A_2A_1$, we conclude similarly $\Img(A_2A_1)=\Img(A_2)=P_2$ and $\Ker (A_2A_1) = \Ker(A_1) = L_1.$ Hence, $P_1=P_2$ and $L_1=L_2$ which is in contradiction with $(P_1,L_1)\neq(P_2,L_2)$. This proves our claim.

From the above we get $A_1A_2=0$ and $A_2A_1=0.$ This implies $P_2\in L_1$ and $P_1\in L_2.$ We now consider four cases:
\begin{enumerate}[$(i)$]
    \item $P_1\in L_1$ and $P_2\in L_2$: If $P_1=P_2$ or $L_1=L_2$ then the condition $(b)$ holds. So, suppose the opposite $P_1\neq P_2$ and $L_1\neq L_2.$ This means that two different points $P_1$ and $P_2$ lie at the same time on two different lines $L_1$ and $L_2$, which is impossible.
    \item $P_1\notin L_1$ and $P_2\in L_2$: Since $P_1\notin L_1$ and $P_2\in L_1$ we have $P_1\neq P_2.$ Similarly, as $P_1\notin L_1$ and $P_1\in L_2$ we get $L_1\neq L_2.$ Furthermore, $P_2\in L_1 \cap L_2$ and $P_1\in L_2\setminus L_1.$ Hence, condition $(c)$ holds.
    \item $P_1\in L_1$ and $P_2\notin L_2$: Since $P_2\notin L_2$ and $P_1\in L_2$ we have $P_1\neq P_2.$ Similarly, as $P_2\notin L_2$ and $P_2\in L_1$ we get $L_1\neq L_2.$ Furthermore, $P_1\in L_1 \cap L_2$ and $P_2\in L_1\setminus L_2.$ Hence, condition $(d)$ holds.
    \item $P_1\notin L_1$ and $P_2\notin L_2$: Since $P_1\in L_2$ and $P_1\notin L_1$ we get $P_1\in L_2\setminus L_1.$ Similarly, as $P_2\in L_1$ and $P_2\notin L_2$ we get $P_2\in L_1 \setminus L_2$. This implies that $P_1 \neq P_2$ and $L_1 \neq L_2,$ hence condition $(e)$ holds.
\end{enumerate}
This finishes the proof.
\end{proof}

We remark that condition $(a)$ of Theorem~\ref{thm:E(Delta)} describes the loops in $\Delta.$ Condition $(b)$ describes the edges between two different vertices in $\E$, it means that two ones in the incidence matrix are connected if and only if they lie in the same row or in the same column. Conditions $(c)$ and $(d)$ describe edges between vertices in $\B$ and $\E$, they mean that 0 and 1 in the incidence matrix are connected if and only if they lie in different rows and columns, and the other two entries of the $2\times 2$ submatrix of $T_p,$ which contains the two entries 0 and 1, are both equal to 1. Condition $(e)$ describes edges between two different vertices in $\B$. It means that two zeros in the incidence matrix $T_p$ are connected if and only if they lie in different rows and columns, and the other two entries of the $2\times 2$ submatrix of $T_p,$ which contains the two zeros, are both equal to 1.  

Figure \ref{fig:submatrices} shows all possible $2\times 2$ submatrices of $T_p$ and the edges between their entries. The red vertices in the figure are vertices of type $\mathrm{(B)}$ and correspond to the zeroes in matrix $T_p,$ while the blue vertices are vertices of type $\mathrm{(E)}$ and correspond to ones in matrix $T_p.$ Note that a $2\times 2$ submatrix of $T_p$ cannot contain only ones because that would mean that two different lines intersect in two different points, which is not possible in the projective plane.

\begin{figure}
    \centering
     \includegraphics[width=0.8\textwidth]{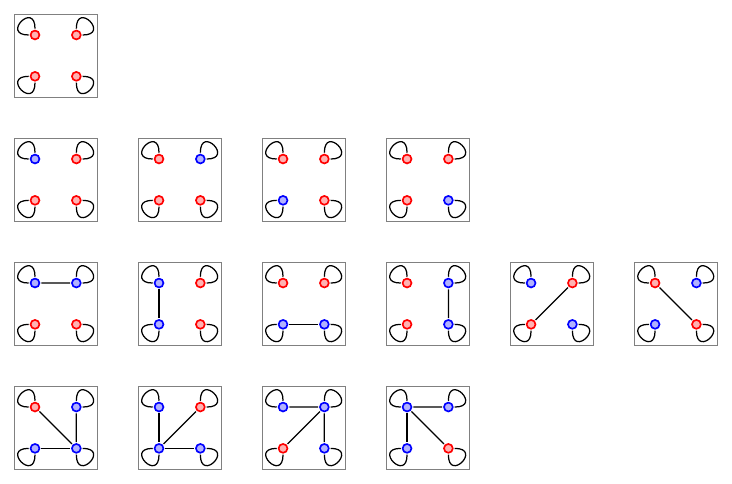}
    \caption{Possible $2\times 2$ submatrices of $T_p$ and the edges between their entries.}\label{fig:submatrices}
\end{figure}

\section{Description of 
\texorpdfstring{$\Lambda^1(\mathcal{M}_3(\GF(p)))$}{Λ¹(ℳ₃(\GF(p)))}
}\label{sec:Description}

In this section we give the  complete description of the unital compressed commuting graph $\Lambda^1(\mathcal{M}_3(\GF(p)))$ of the ring of $3 \times 3$ matrices over $\GF(p)$ and give an algorithm for its construction.

Note that the properties of the subgraph of $\Lambda^1(\mathcal{M}_3(\GF(p)))$ induced on the set of vertices $V_{\mathrm{(B)}} \cup V_{\mathrm{(E)}}$ are described in Section~\ref{sec:B-E_graph}. This subgraph can be constructed by considering entries of the incidence matrix $T_p$ as vertices and setting the edges according to Figure~\ref{fig:submatrices}. Here we describe the properties of the rest of the graph, according to the type of vertices, explaining how the remaining vertices are attached to the $\mathrm{(B)}$--$\mathrm{(E)}$ subgraph. Furthermore, according to Table~\ref{tab:AllNeighborhoods}, the subgraph of $\Lambda^1(\mathcal{M}_3(\GF(p)))$ induced on the set of vertices $V_{\mathrm{(C)}} \cup V_{\mathrm{(D)}} \cup V_{\mathrm{(F)}} \cup V_{\mathrm{(G)}} \cup V_{\mathrm{(H)}}$ contains only loops and no other edges. These properties will allow us to give an algorithm for the construction of the whole graph.

\begin{enumerate}[(A)]
    \setcounter{enumi}{2}
    \item Note that for $p=2$ there are no vertices of type $\mathrm{(C)}$, so suppose that $p\geq 3.$
    By Table~\ref{tab:AllNeighborhoods}, every vertex of type $\mathrm{(C)}$ is connected to $3$ vertices of type $\mathrm{(B)}$, which form a triangle, since they are subrings of a commutative ring, namely, the vertex of type $(\mathrm{C})$. So, every vertex of type $\mathrm{(C)}$ is connected to the vertices of a unique triangle, whose vertices are of type $\mathrm{(B)}$ in the $\mathrm{(B)}$--$\mathrm{(E)}$ graph.
    Now, we show that for every triangle of vertices of type $\mathrm{(B)}$ there is a vertex of type $\mathrm{(C)}$ connected to them.
    Choose three different vertices of type $(\mathrm{B})$ that form a triangle. Since their generators commute, they are simultaneously diagonalizable, so they commute with any matrix of type $\mathrm{(C)}$ that is diagonalizable in the same basis. All such matrices of type $(\mathrm{C})$ generate the same vertex of type $(\mathrm{C})$. Note that this vertex is unique because the basis is unique up to scalar multiplication of its members.
    As a consequence, the number of triangles of vertices of type $\mathrm{(B)}$ is equal to the number of vertices of type $\mathrm{(C)}$, which is equal to $\frac{1}{6}(p^2+p+1)p^3(p+1)$, by Table~\ref{tab:RedTable}.
    
    \setcounter{enumi}{5}
    \item By Table~\ref{tab:AllNeighborhoods}, every vertex of type $\mathrm{(F)}$ is connected to precisely one vertex of type $\mathrm{(B)}$ and one vertex of type $\mathrm{(E)}$. These two vertices are connected by an edge since they are subrings of the same commutative ring. So, every vertex of type $\mathrm{(F)}$ is connected to the endpoints of a unique $\mathrm{(B)}$--$\mathrm{(E)}$ edge.
    From Table~\ref{tab:RedTable} it follows that the number of vertices of type $\mathrm{(B)}$ is $(p^2+p+1)p^2$ and from Table~\ref{tab:AllNeighborhoods} we have that each vertex of type $\mathrm{(B)}$ is connected to $p+1$ vertices of type $\mathrm{(E)}$, so the number of $\mathrm{(B)}$--$\mathrm{(E)}$ edges is equal to $(p^2+p+1)p^2\cdot (p+1)$. Furthermore, this is equal to the number of vertices of type $\mathrm{(F)}$. Hence, the endpoints of every $\mathrm{(B)}$--$\mathrm{(E)}$ edge are connected to a unique vertex of type $\mathrm{(F)}$.
    
    \setcounter{enumi}{7}
    \item By Table~\ref{tab:AllNeighborhoods}, every vertex of type $\mathrm{(H)}$ is connected to a unique vertex of type $\mathrm{(B)}$. Every vertex of type $\mathrm{(B)}$ has $\tfrac{p(p-1)}{2}$ vertices of type $\mathrm{(H)}$ in the neighborhood.
    From Table~\ref{tab:RedTable} we see that there are $\frac{1}{2}(p^3-1)p^3$ vertices of type $\mathrm{(H)}$ and $(p^2+p+1)p^2$ vertices of type $\mathrm{(B)}$.
    Note that $\frac{1}{2}(p^3-1)p^3=(p^2+p+1)p^2 \cdot \tfrac{p(p-1)}{2}$, hence, vertices of type $\mathrm{(H)}$ are partitioned into $(p^2+p+1)p^2$ groups, and each group is connected to a unique vertex of type $\mathrm{(B)}$.
    
    \setcounter{enumi}{3}
    \item By Table~\ref{tab:AllNeighborhoods}, every vertex of type $\mathrm{(D)}$ is connected to a unique vertex of type $\mathrm{(E)}$. Every vertex of type $\mathrm{(E)}$ has $p-1$ vertices of type $\mathrm{(D)}$ in the neighborhood.
    There are $(p^3-1)(p+1)$ vertices of type $\mathrm{(D)}$ and $(p^2+p+1)(p+1)$ vertices of type $\mathrm{(E)}$.
    Since $(p^3-1)(p+1)=(p^2+p+1)p^2 \cdot (p-1)$, vertices of type $\mathrm{(D)}$ are partitioned into $p-1$ groups, and each group is connected to a unique vertex of type $\mathrm{(E)}$.
    
    \setcounter{enumi}{6}
    \item By Table~\ref{tab:RedTable}, there are $\frac{1}{3}(p^3-p)(p^3-p^2)$ vertices of type $\mathrm{(G)}$ and they are not connected to the $\mathrm{(B)}$--$\mathrm{(E)}$ graph.
    
    \setcounter{enumi}{0}
    \item There is a unique vertex of type $\mathrm{(A)}$ and it its connected to every vertex of the graph. 
\end{enumerate}

We are now ready to give an algorithm for the construction of the unital compressed commuting graph $\Lambda^1(\mathcal{M}_3(\GF(p)))$ of the ring $\mm_3(\GF(p))$.

\begin{enumerate}[1.]
    \item We construct the $\mathrm{(B)}$--$\mathrm{(E)}$ graph as described in Section~\ref{sec:B-E_graph}. The vertices of type $\mathrm{(B)}$ correspond to the zeroes in the incidence matrix $T_p$ of the projective geometry $\PG(2,p)$ given in \eqref{eq:T_p}, and the vertices of type $\mathrm{(E)}$ correspond to ones in the same matrix. The edges between these vertices are presented in Figure~\ref{fig:submatrices}.
    \item If $p\geq 3$ then for every triangle of vertices of type $\mathrm{(B)}$ we add one vertex of type $\mathrm{(C)}$ and connect it the vertices of the triangle. If $p=2$ we skip this step.
    \item For every $\mathrm{(B)}$--$\mathrm{(E)}$ edge we add one vertex of type $\mathrm{(F)}$ and connect it to the endpoints of the edge.
    \item For every vertex of type $\mathrm{(B)}$ we add $\frac{p(p-1)}{2}$ vertices of type $\mathrm{(H)}$ and connect them to the vertex of type $\mathrm{(B)}$.
    \item For every vertex of type $\mathrm{(E)}$ we add $p-1$ vertices of type $\mathrm{(D)}$ and connect them to the vertex of type $\mathrm{(E)}$.
    \item We add $\frac{1}{3}(p^3-p)(p^3-p^2)$ vertices of type $\mathrm{(G)}$.
    \item We add one vertex of type $\mathrm{(A)}$ and connect it to every other vertex.
    \item We put a loop on every vertex.
\end{enumerate}

The above algorithm gives the complete description of the unital compressed commuting graph $\Lambda^1(\mm_3(\GF(p)))$, which is the main contribution of this paper.

\section{Commuting graph of \texorpdfstring{$\mm_3(\GF(p))$}{ℳ₃(\GF(p))}}\label{sec:CG}

Finally, we demonstrate how the unital compressed commuting graph $\Lambda^1(\mathcal{M}_3(\GF(p)))$ can be used to describe the ordinary (non-compressed) commuting graph $\Gamma(\mathcal{M}_3(\GF(p)))$.

For $2\times 2$ matrices over a finite field $\mathbb{F}$ the structure of the commuting graph $\Gamma(\mm_2(\mathbb{F}))$ is described in \cite[Theorem 2]{AkGhHaMo04}. In particular, this graph is a disjoint union of $|\mathbb{F}|^2+|\mathbb{F}|+1$ cliques of size $|\mathbb{F}|^2-|\mathbb{F}|.$  For $3 \times 3$ matrices the description of the commuting graph $\Gamma(\mm_3(\mathbb{F}))$ is still an open problem. The graph was partially described in \cite[Lemma 4.1]{DoKoKu17} where the authors showed that the graph is not connected and all but one of its components are cliques. Furthermore, every connected component that is a clique equals $\mathbb{F}[A]\setminus \mathbb{F}I$ where $A$ is a non-derogatory matrix with irreducible minimal polynomial such that there is no intermediate field between fields $\mathbb{F}$ and $\mathbb{F}[A].$

Using the results of Section~\ref{sec:Description} we can now completely describe the graph $\Gamma(\mm_3(\mathbb{F}))$ in the case when $\mathbb{F} = \GF(p)$. We will do this using the so-called "blow-up" process that was originally used for zero-divisor graphs in \cite{DuJeSt21a, BlWi10}.

We start with $\Lambda^1(\mm_3(\GF(p)))$ described in Section~\ref{sec:Description}. To obtain the graph $\Gamma(\mm_3(\GF(p)))$ we first remove the unique vertex of type $\mathrm{(A)}$ and all edges incident to this vertex. Then we remove the loop  from every vertex. In the final step, we "blow-up" (one by one in an arbitrary order) each vertex $v$ of type $\mathrm{(X)}$ into a clique of size $\gen_{\mathrm{(X)}}$, where $\gen_{\mathrm{(X)}}$ is given in Table~\ref{tab:RedTable}. Furthermore, we connect each vertex of this clique to every vertex that was connected to $v$. Once we do this for all the vertices, we obtain the graph $\Gamma(\mm_3(\GF(p))).$ 

We remark that after removing the unique vertex of type $\mathrm{(A)}$ along with all of his edges and all the loops from $\Lambda^1(\mm_3(\GF(p))),$ the graph breaks into several connected components. All but one of the components are single vertices, these are precisely vertices of type $\mathrm{(G)}$ and there are $\frac{1}{3}(p^3-p)(p^3-p^2)$ of them. After the "blow-up" process these become cliques of size $p^3-p$ in $\Gamma(\mm_3(\GF(p))).$ The only remaining connected component contains all the matrices of types $\mathrm{(B)}$, $\mathrm{(C)}$, $\mathrm{(D)}$, $\mathrm{(E)}$, $\mathrm{(F)}$, and $\mathrm{(H)}$. This is in accordance with the partial description in \cite[Lemma 4.1]{DoKoKu17}.

\section*{Acknowledgments}
The support by the bilateral grants BI-BA/24-25-024 and BI-BA/26-27-008 of the ARIS (Slovenian Research and Innovation Agency) is gratefully acknowledged. Damjana Kokol Bukovšek and Nik Stopar acknowledge financial support from the ARIS (research core funding No. P1-0222 and projects J1-70034 and J1-50002). Ivan-Vanja Boroja acknowledges financial support (No. 01-67-6/21) from the Municipality of Mrkonjić Grad.

\bibliographystyle{amsplain}
\bibliography{biblio}

\end{document}